\documentclass[11pt]{article}
\usepackage{amsmath,amssymb,amsthm}
\usepackage{float}
\usepackage[top=2cm,bottom=2cm,left=3cm,right=3cm,marginparwidth=1.75cm]{geometry}
\usepackage{graphicx}
\usepackage{xcolor}
\usepackage{authblk}
\usepackage{hyperref}
\usepackage{mathrsfs}
\usepackage{upgreek}
\usepackage{enumitem}
\usepackage{bbm}

\usepackage[square,numbers]{natbib}
\theoremstyle{plain}
\newtheorem{theorem}{Theorem}[section]
\newtheorem{lemma}[theorem]{Lemma}
\newtheorem{proposition}[theorem]{Proposition}
\newtheorem{proposition*}{Proposition}

\newtheorem{claim}{Claim}

\theoremstyle{definition}

\newtheorem{example}{Example}[section]
\newtheorem*{example*}{Example}

\theoremstyle{remark}

\numberwithin{equation}{section}

\title{Improved Survival Results for the One-Dimensional Renewal Contact Process}
\date{}
\author[1]{Gustavo O. de Carvalho}
\author[2]{Lucas R. de Lima}
\affil[1]{\small Department of Statistics, Institute of Mathematics, Statistics and Scientific Computing, University of Campinas--SP, Brazil. \textit{e-mail:} \texttt{godc@unicamp.br}}
\affil[2]{\small Department of Mathematics, Institute of Mathematics and Computer Science, University of S\~ao Paulo, S\~ao Carlos--SP, Brazil. \textit{e-mail:} \texttt{lrdelima@icmc.usp.br}}

\DeclareMathOperator{\supp}{\mathrm{supp}}

\begin{document}

\maketitle

\begin{abstract}
The renewal contact process is a non-Markovian variant of the classical contact process in which recoveries are governed by independent renewal processes with interarrival distribution $\mu$. We establish new sufficient conditions ensuring finiteness of the critical infection parameter $\lambda_c(\mu)$ for the one-dimensional model. In particular, we prove that $\lambda_c(\mu)<+\infty$ for every non-degenerate arithmetic interarrival distribution. Moreover, finiteness holds whenever the atomic component of the renewal measure is uniformly small on sufficiently short intervals. This criterion applies in particular to all non-atomic interarrival distributions, including singular continuous laws. The proof combines local estimates for renewal measures with a comparison to a regenerative oriented percolation model and a Peierls-type contour argument.
\end{abstract}

\noindent\textbf{2020 Mathematics Subject Classification.}
60K35, 82C22, 60G55.

\medskip

\noindent\textbf{Keywords.}
Renewal contact process, interacting particle systems,
renewal processes, oriented percolation,
phase transition, graphical construction.

\section{Introduction}

The contact process, introduced by Harris in \cite{Harris1974}, is one of the fundamental models in the theory of interacting particle systems. In its classical form, each vertex of $\mathbb Z^d$ represents an individual that may be either healthy or infected. Infected vertices recover at rate $1$, while healthy vertices become infected at rate proportional to the number of infected neighbors. The model is Markovian and admits the graphical construction introduced by Harris in \cite{Harris1978}, which plays a central role in the probabilistic analysis of interacting systems; see for instance \cite{Durrett1995,Liggett1985}.

A natural non-Markovian extension of the contact process was introduced by Fontes, Marchetti, Mountford and Vares in \cite{fontes2019}. In this model, called the \emph{renewal contact process}, recovery times are no longer governed by exponential clocks. Instead, each vertex $x\in\mathbb Z^d$ is equipped with an independent renewal process $\mathcal{R}_x$ with interarrival distribution $\mu$, whose marks represent recovery times at the vertex. Infection attempts between neighboring vertices still occur according to independent Poisson processes of rate $\lambda$.

The resulting dynamics retain the graphical character of the classical contact process, but lose the Markov property. Indeed, the future evolution of the process depends on the elapsed time since the last renewal mark at each vertex. This temporal dependence creates substantial additional difficulties, particularly in one dimension where infection paths necessarily revisit vertices and repeatedly interact with the same renewal processes.

Given an initial infected set $A\subseteq\mathbb Z^d$, let $(\xi_t^A)_{t\ge0}$ denote the renewal contact process with interarrival distribution $\mu$, infection parameter $\lambda$, and initial condition $\xi_0^A=A$. By attractiveness, survival starting from a single infected vertex implies survival from every nonempty finite initial configuration. Writing $\xi_t:=\xi_t^{\{0\}}$, one defines the critical infection parameter by
\[
\lambda_c(\mu):=
\inf\left\{
\lambda>0:
\mathbb P\bigl(|\xi_t|\neq0 \text{ for all } t\ge0\bigr)>0
\right\}.
\]
A central problem in the study of the renewal contact process is to determine under which conditions on $\mu$ the critical value $\lambda_c(\mu)$ is zero, positive, finite, or infinite.

The first results in this direction were obtained in \cite{fontes2019}, where it was shown that sufficiently heavy-tailed interarrival distributions imply $\lambda_c(\mu)=0$. Later, \cite{fontes2020,fontes2023} established broad sufficient conditions ensuring positivity of the critical parameter. In particular, \cite{fontes2023} proved that $\lambda_c(\mu)>0$ whenever the interarrival distribution has finite $\alpha$-moment for some $\alpha>1$.

The complementary question of whether $\lambda_c(\mu)$ is finite appears to be considerably more delicate, especially in dimension one. In higher dimensions, survival may be achieved through infection paths that avoid revisiting vertices, allowing relatively direct comparisons with supercritical oriented percolation; see \cite{hilario2022}. In contrast, one-dimensional infection paths necessarily reuse vertices, producing long-range temporal dependencies through the renewal structure.

Recently, Santos and Vares \cite{santos2024} obtained important progress on this problem by proving finiteness of the critical parameter for classes of interarrival distributions, including continuous distributions with bounded support and absolutely continuous distributions with decreasing hazard rate. Their approach combines geometric crossing constructions with domination arguments for dependent oriented percolation models.

The present work extends their results by following a different strategy. Rather than relying on regularity assumptions on the hazard rate or on bounded-support hypotheses, we exploit local properties of the renewal measure itself. More precisely, we show that sufficiently small concentration of the atomic component of the renewal measure on short intervals yields a supercritical regenerative oriented percolation structure.

Our first result establishes that every non-degenerate arithmetic interarrival distribution gives rise to a finite critical parameter. Moreover, we obtain a criterion based on local bounds for the atomic component of the renewal measure. Roughly speaking, if the renewal measure does not concentrate too much atomic mass inside sufficiently short intervals, then the associated renewal contact process survives for large infection rates.

An important feature of our approach is that it does not rely on absolute continuity assumptions. In particular, every non-atomic interarrival distribution satisfies our criterion. This includes singular continuous laws, such as Cantor-type distributions, which are not covered by previous methods based on hazard-rate regularity.

The proofs combine ideas from renewal theory, oriented percolation, and contour methods. We first establish quantitative local estimates for the renewal measure and then construct a dependent oriented percolation model possessing a regenerative structure along vertical columns. The resulting dependence is controlled through a Peierls-type contour argument together with a parity decomposition of crossed columns.

The paper is organized as follows. In Section~\ref{sec:main.results} we state the main results and discuss examples of interarrival distributions covered by our criteria. Section~\ref{sec:renewal.bounds} develops the local estimates for renewal measures that constitute the probabilistic core of the argument. In Section~\ref{sec:regen_perc} we introduce the regenerative oriented percolation model and establish the key percolation estimate. The proofs of the main results are given in Section~\ref{sec:proofs}. Finally, Appendix~\ref{sec:appendix} contains the contour-counting estimates used in the Peierls argument.

\section{Main results} \label{sec:main.results}

We consider a renewal process whose interarrival distribution is a probability measure $\mu$ supported on $[0,+\infty)$ with $\mu(\{0\})=0$. Its mean is
\[
m := \int_0^{+\infty} x\,\mu(dx)\in(0,+\infty],
\]
and in the infinite-mean case we adopt the convention $m^{-1}=0$. For $n\ge 0$, let $\mu^{*n}$ denote the $n$-fold convolution of $\mu$, with $\mu^{*0}=\delta_0$. The associated \emph{renewal measure} is
\[
U := \sum_{n\ge 0} \mu^{*n},
\]
which records the distribution of all renewal epochs of the process. We say that $\mu$ is \emph{arithmetic} if there exists $d>0$ such that $\supp(\mu)\subseteq d\mathbb{N}$ almost surely with $\mathbb{N}:=\{1,2,\dots\}$. Otherwise, $\mu$ is \emph{non-arithmetic}. (These distributions are also commonly called lattice and non-lattice, respectively.)

\begin{theorem}\label{thm:arithmetic}
Let $\mu$ be a non-degenerate arithmetic distribution.  
Then $\lambda_c(\mu) \;<\; +\infty$.
\end{theorem}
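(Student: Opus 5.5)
The strategy is to compare the one-dimensional renewal contact process with a dependent oriented percolation on a space-time grid. The grid is built so that the renewal marks of each column act as rare ``bad'' events, and the resulting dependence is handled by a Peierls argument.

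Since $\mu$ is arithmetic, after rescaling time we may assume $\supp(\mu)\subseteq\mathbb N$ with lattice span $1$. All recovery marks then lie in $\mathbb N$. Non-degeneracy means $\mu$ is not a single point mass $\delta_k$. This is the key input. If $\mu=\delta_k$, all renewal processes are deterministic and synchronized, up to their initial delays. In the non-degenerate case, the renewal theorem gives $U(\{n\})\to m^{-1}<1$ as $n\to\infty$, with $m^{-1}=0$ if $m=\infty$. Moreover $U(\{n\})\le 1-\delta$ for all $n\ge1$, for some $\delta>0$. Indeed, $U(\{n\})=1$ would force a mark at time $n$ almost surely, and only finitely many $n$ can have $U(\{n\})$ close to $1$.

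The first key step is a local renewal estimate. Fix a long block of consecutive integers $I=\{n,\dots,n+L-1\}$. We show that with probability close to $1$ some integer in $I$ is unmarked, uniformly in the starting time (age) of the renewal process. The count of unmarked integers in $I$ is controlled by regeneration: after each mark, the next gap exceeds $1$ with probability $p:=\mu(\{2,3,\dots\})>0$, using non-degeneracy and span $1$. Hence the probability that all $L$ consecutive integers in $I$ are marked is at most $(1-p)^{L-1}$.

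An unmarked integer $k$ gives a free unit time window $(k-1,k+1)$ for a site: a window of length $\approx 2$ with no recovery, unless $\mu$ has atoms at $1$ forcing marks at both ends. More precisely, between consecutive marks at distance $\ge2$, the open interval has length $\ge 2$. On such windows, with $\lambda$ large, infection arrows between neighbours in both directions occur with probability close to $1$.

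The second step defines the percolation. Put space-time boxes of side $1$ in space and height $L$ in time. Declare the oriented bond from $(x,j)$ to $(x\pm1,j+1)$ open if two conditions hold. First, sites $x$ and $x\pm1$ each have an overlapping gap of length $\ge2$ in the relevant time blocks. Second, the Poisson arrows transmit infection across the window during these gaps. Because gaps of adjacent columns need not overlap, I would instead use blocks of time length $2L$. Within such a block, each column has $\gtrsim \epsilon L$ gaps of length $\ge 2$ with high probability. The infection is carried within a column across a gap, and passed to the neighbour during any moment both are healthy-free; with $\lambda$ large, arrows fill every short interval. The passage requires a time where both columns are mark-free simultaneously. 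Since marks lie on integers, both columns are mark-free on every open interval $(k,k+1)$, so only the integer times are ever problematic.

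In fact the crucial observation is that infection survives at a site across an integer time $k$ exactly when $k$ is unmarked there. So the infection moves along the sites, jumping to the neighbour in $(k,k+1)$ whenever the current site will be marked at $k+1$. With $\lambda$ large, a jump succeeds with high probability. This reduces the process to a discrete-time model on $\mathbb Z\times\mathbb N$: at each integer time, the infected set loses the marked sites, and each surviving infected site infects its neighbours before the next integer time.

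The dependence lives in the columns, through the renewal structure, and is regenerative. Bounding a path that is blocked requires a contour (Peierls) argument. A blocking contour must cross many columns, and in each crossed column it must find specific marks. The probability of a column being marked at prescribed times is bounded by products of $\sup_n U(\{n\})\le1-\delta$, via the renewal (Markov-at-marks) property. Since a blocking configuration needs both endpoints of a blocked edge to be marked, the per-edge probability is at most roughly $(1-\delta)$. The worry is that this is not small.

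This is where I expect the main obstacle. A single integer step is not a small-probability event, so I would coarse-grain: use blocks of $L$ integer times, and let an edge be closed only if infection fails to pass in $L$ tries. The renewal estimate then makes the probability of a closed edge $\le (1-p)^{cL}+e^{-c\lambda}$. The regeneration along columns lets conditional bounds multiply over the contour edges. One must separate crossed columns into even and odd parities so that the events along a contour are conditionally independent given neighbouring columns. With the contour counting $\le C^n$ and per-edge probability $\varepsilon(L,\lambda)\to0$, the Peierls sum is $<1$. This yields percolation, hence survival, for large $\lambda$, i.e.\ $\lambda_c(\mu)<\infty$.
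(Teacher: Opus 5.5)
Your preliminary renewal facts are fine. The bound $\sup_n U(\{n\})\le 1-\delta$ is essentially the paper's Lemma~\ref{lem:discrete-U-atom-bound}, and $\mathbb P(\text{all of }L\text{ consecutive integers marked})\le \mu(\{1\})^{L-1}$ is correct. The gap is in the step meant to make closed edges rare, and it fails in two ways.

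First, your reduction to a discrete model, in which each surviving infected site infects only its nearest neighbours during $(k,k+1)$, discards the only thing a large $\lambda$ buys. Take $\mu=0.99\,\delta_1+0.01\,\delta_2$. Given the past, each site is unmarked at the next integer with probability at most $0.01$. So the expected number of nearest-neighbour space-time chains of length $n$ through unmarked points is at most $(0.03)^n$. Your reduced model therefore dies out even if every nearest-neighbour transmission succeeds, and no Peierls argument applied to it, coarse-grained or not, can yield percolation.

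Second, coarse-graining in time goes the wrong way. An open edge from box $(x,j)$ to box $(x\pm1,j+1)$ must guarantee that the infection is still alive one time block later. With spatial boxes of bounded width $w$, this means surviving every one of roughly $L$ integer times while confined to $w$ sites. In the same example each site is marked at the next integer with conditional probability at least $0.99$, so survival has probability at most $(1-0.99^{w})^{L}$. Closed edges thus become \emph{more} likely as $L$ grows. Your bound $(1-p)^{cL}$ controls the event that one site is marked at all integers of a block. But the infection dies as soon as, at a single integer time, all currently infected sites are marked. The phrase ``fails to pass in $L$ tries'' tacitly assumes the infection is stored at $x$ throughout the block, and nothing guarantees that.

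The missing idea, which is what the paper does, is to coarse-grain in space rather than in time. Take blocks $B_k$ of $M$ consecutive sites and keep one lattice step $d$ per percolation step. There are no recoveries in $(\ell d,(\ell+1)d)$. Hence a chain of $2M-1$ successive rightward arrows in that window carries infection from any infected site of $B_k$ across all of $B_{k+1}$, and this chain has probability $\mathbb P(\mathrm{Poisson}(\lambda d)\ge 2M-1)\to 1$ as $\lambda\to\infty$. The edge is declared open if, in addition, not every site of $B_{k+1}$ is marked at $(\ell+1)d$. By independence of the renewal processes at distinct sites, this fails with probability at most $c^M$, where $c=\sup_k U(\{dk\})<1$, which is small for $M$ large. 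On that failure event all $M$ sites renew simultaneously. This is exactly the regeneration that gives the product bound (I) along a column, while (II) holds because edges two block-columns apart involve disjoint sites. Your ingredients (the uniform bound on $U$, regeneration at marks, the parity split in the Peierls sum) then suffice, once smallness comes from $M$ independent sites at one time rather than from $L$ times at one site.
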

\medskip

While the condition of being arithmetic in Theorem \ref{thm:arithmetic} is technical and may possibly be improved, the non-degenerate condition is necessary. In fact, if $\mu(\{a\})=1$ for some $a>0$, then the infection almost surely dies out at instant $a$ and thus $\lambda_c(\mu)=+\infty$.

In what follows, we use the fact that every probability measure $\nu$ on $[0,+\infty)$ admits a decomposition $\nu=\nu_{\mathrm{at}}+\nu_{\mathrm{cont}}$, where $\nu_{\mathrm{at}}$ is purely atomic and $\nu_{\mathrm{cont}}$
has no atoms. Hence, 
\[
\nu_{\mathrm{at}}:= \sum_{x:\,\nu(\{x\})>0}\nu(\{x\})\delta_x
\]
and $\nu_{\mathrm{cont}}=\nu-\nu_{\mathrm{at}}$. The proof of the next theorem relies on an explicit Peierls-type estimate. For convenience we fix the constant
\[
\eta:=2^{-8}=0.00390625.
\]
A derivation of this value is given in Appendix~\ref{sec:appendix}.

\begin{theorem}\label{thm:bounded}
Let $\mu$ be an interarrival distribution and let $U = \sum_{n\ge 0} \mu^{*n}$
be its renewal measure. If there exist $\kappa>0$ such that
\[
   \sup_{a\ge 0} U_{\mathrm{at}}\big( (a,a+\kappa] \big) \;<\; \eta/2,
\]
then $\lambda_c(\mu) \;<\; +\infty$. In particular, the conclusion holds for every non-atomic interarrival distribution.
\end{theorem}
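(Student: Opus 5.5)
Our plan is to compare the renewal contact process with a dependent oriented percolation model on a space-time grid with time-step of order $\kappa$ (or a fraction of it), and to show via a Peierls contour argument that it percolates when $\lambda$ is large.

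\textbf{Step 1: a good event for one column and one time window.} Partition time into intervals $I_k=(k\delta,(k+1)\delta]$, with $\delta\le\kappa$ to be chosen. Call the pair $(x,k)$ \emph{good} if $\mathcal R_x$ has at most one mark in $I_k\cup I_{k+1}$, and moreover any such mark leaves gaps of length at least $\varepsilon\delta$ on each side, for a small $\varepsilon>0$.

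\textbf{Step 2: use good pairs to transmit infection.} On a good pair, every point of $I_k$ lies in a mark-free subinterval of length at least $\varepsilon\delta$. Once $\varepsilon\delta$ is fixed, we let $\lambda\to\infty$. Then, with probability tending to one, infection arrows in both directions between $x$ and $x\pm1$ appear in every such subinterval. These arrows re-infect $x$ after the mark and pass the infection to the neighbours. Declaring $(x,k)\to(x\pm1,k+1)$ open when both columns are good and the relevant arrows exist produces an oriented percolation model. Percolation in this model implies survival.

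\textbf{Step 3: bounding the probability that a pair is bad.} This is the heart of the argument. The expected number of marks of $\mathcal R_x$ in $(a,a+\kappa]$ is bounded by $\sup_b U((b,b+\kappa])$. Split $U=U_{\mathrm{at}}+U_{\mathrm{cont}}$. Since $U$ is locally finite and $U_{\mathrm{cont}}$ has no atoms, a uniform-continuity (compactness) argument should show that $\sup_a U_{\mathrm{cont}}((a,a+\delta])\to0$ as $\delta\to0$. The range of $a$ is unbounded, so we handle large $a$ using Blackwell/Stone-type renewal theorems, which keep $U((a,a+\delta])$ of order $\delta/m$. On the atomic side, the hypothesis gives mass less than $\eta/2$.

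Two marks within $\varepsilon\delta$ of each other, or a mark near the window boundary, would make the pair bad. By the strong Markov property at the first mark, the probability of two marks in a window of length $2\delta$ is at most $U((a,a+2\delta])\cdot\sup_b U((b,b+2\delta])$, up to the atomic contribution. The near-boundary event is controlled by the continuous part for small $\varepsilon$. This should give that $(x,k)$ is bad with probability at most $\eta/2+o(1)$. The same bound holds conditionally on the past of the column, by applying the renewal property at the last mark before $I_k$.

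\textbf{Step 4: Peierls argument and the main obstacle.} Good events in the same column are dependent through the renewal structure. Different columns are independent. We control this exactly as the paper's regenerative oriented percolation estimate does. A closed dual contour of length $n$ crosses columns, and we use the parity decomposition of crossed columns: among the time windows it hits, roughly half are pairwise separated. On such a separated set, the conditional bound from Step 3 gives a product bound of $(\eta+o(1))^{cn}$ for the contour being closed. The appendix counting estimate with $\eta=2^{-8}$ then makes $\sum_n(\#\text{contours})\cdot(\text{probability})<1$. Hence the origin percolates with positive probability for large $\lambda$, so $\lambda_c(\mu)<\infty$.

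The main obstacle is making Step 3 hold \emph{uniformly} in the time shift and conditionally on the column's past, which is what the product bound in Step 4 needs. The uniform smallness of $U_{\mathrm{cont}}$ on short intervals is the delicate point, especially in the infinite-mean case.

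For the final claim: if $\mu$ has no atoms, then neither does $\mu^{*n}$ for any $n\ge1$. Hence the only atom of $U$ is $\delta_0$ at $0$. Taking suprema over $a\ge0$ of half-open intervals $(a,a+\kappa]$ excludes $0$, so $U_{\mathrm{at}}\equiv0$ there and the hypothesis holds trivially.
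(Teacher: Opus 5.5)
Your architecture matches the paper's. You split $U=U_{\mathrm{at}}+U_{\mathrm{cont}}$, use Blackwell for large $a$ and uniform continuity for bounded $a$ to control $U_{\mathrm{cont}}$, build a column-regenerative oriented percolation, and close with the parity Peierls bound. Your argument for the non-atomic case is also correct. The trouble is your choice of good event, which breaks Step~2.

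\textbf{Step 2 fails.} Goodness is defined column by column, so nothing prevents $x$ and $x\pm1$ from having their single marks at times $s,s'\in I_k$ with $|s-s'|<\varepsilon\delta$. After both marks neither site need be infected, so an open edge need not transmit the infection, and percolation no longer implies survival. This event is not negligible. The columns are i.i.d.\ renewal processes started at $0$, so an atom of $U$ at $t_0$ cures both sites at exactly $t_0$ with probability $U(\{t_0\})^2>0$. You would need an extra cross-column separation condition.

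\textbf{Allowing one mark gains nothing.} Contrary to Step~3, marks within $\varepsilon\delta$ of a window endpoint are \emph{not} controlled by $U_{\mathrm{cont}}$ when $U_{\mathrm{at}}$ has atoms there. So a column is bad with probability of order the $U_{\mathrm{at}}$-mass of the window anyway, exactly as if you forbade marks outright. Also, for two marks the small factor is $\mu((0,2\delta])$, not $\sup_bU((b,b+2\delta])$.

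\textbf{The overlapping windows cost you the threshold.} The windows $I_k\cup I_{k+1}$ at consecutive levels overlap. Conditioning at the last mark before $I_k$ therefore does not give property (I) for consecutive colinear edges, because the edge at level $k-1$ already depends on the marks in $I_k$. Passing to separated windows, as you suggest, halves the exponent again. You then get roughly $\sum_n(n-1)3^{n-2}\eta^{n/8}$, and since $3\,(2^{-8})^{1/8}=3/2>1$, Lemma~\ref{lem:threshold} no longer applies and the series diverges at the threshold fixed in the statement.

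\textbf{Strictness.} Your bound ``$\eta/2+o(1)$'' must become a strict bound, using the slack in the strict hypothesis $\sup_a U_{\mathrm{at}}((a,a+\kappa])<\eta/2$.

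\textbf{How the paper avoids all of this.} Proposition~\ref{prop:uniform-local-U_at} gives $\nu$ with $u_\nu=\sup_aU((a,a+\nu])<\eta/2$. Take disjoint windows $(\ell\nu,(\ell+1)\nu]$. Declare the edge from $(k,\ell)$ open if and only if neither endpoint column has a mark in the window and the corresponding arrow occurs. Then:
\begin{itemize}
\item transmission along an open edge is immediate;
\item Markov's inequality gives closure probability $<2u_\nu+(\eta-2u_\nu)=\eta$ once $\lambda$ is large;
\item the renewal property at the first mark in each disjoint window gives the bound $\eta^k$ for every family of $k$ distinct colinear edges.
\end{itemize}
This keeps the exponent $n/4$ that the proof of Proposition~\ref{prop:percol} needs.
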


We now illustrate our results with some concrete classes of distributions.

\begin{example}[Mixtures of Non-Atomic and Purely Atomic Laws]
\label{ex:mixture-U2}
Let $p\in(0,1)$ and consider $\mu$ to be the law of a random variable
\[
   X = \begin{cases}
       Y, & \text{with probability } 1-p,\\[1mm]
       Z, & \text{with probability } p,
   \end{cases}
\]
with $Y$ non-atomic and $Z$ purely atomic strictly positive random variables. Then the interarrival distribution $\mu$  can be written as
\[
   \mu = (1-p)\,\mu_Y + p\,\mu_Z,
\]
with $\mu_Y$ and $\mu_Z$ the laws of $Y$ and $Z$, respectively. For each $n\ge1$, the $n$-fold convolution of $\mu$ expands as
\[
   \mu^{*n}
   = \sum_{k=0}^n \binom{n}{k} (1-p)^{\,n-k} p^{\,k}\,
     \mu_Y^{*(n-k)} * \mu_Z^{*k}.
\]
 By Lema \ref{lem:atomic-part-convolution}, $(\mu^{*n})_{\mathrm{at}} = p^n\mu_Z^{\ast n}$. Therefore, $U_{\mathrm{at}} = \sum_{n\ge1} p^n\, \mu_Z^{*n}$, and
\[U_{\mathrm{at}}\bigl((0,+\infty) \bigl)= \frac{p}{1-p}.\]

Hence, by Theorem \ref{thm:bounded}, $\lambda_c(\mu)<+\infty$ whenever  $p<\frac{\eta/2}{1+\eta/2}=\frac{1}{513}$.
\end{example}

\begin{example}[Singular continuous distribution with unbounded support]
Let $\nu$ be the classical Cantor distribution on $[0,1]$, and let
\[
\mu:=\sum_{n=1}^{+\infty} 2^{-n}\,\nu(\,\cdot-(n-1)).
\]
Equivalently, if $X\sim \nu$ and $G$ is an independent geometric random variable with $\mathbb P(G=k)=2^{-k}$ for all $k\in\mathbb{N}$. Then $\mu$ is the law of $X+G-1$.

The measure $\mu$ is singular continuous, since it is a countable convex combination of translates of the Cantor distribution. Moreover, $\supp(\mu)=\bigcup_{n\in\mathbb N_0}\bigl(\mathbf C+n\bigr)$, where $\mathbf C$ denotes the classical Cantor set. Since $\mu$ has no atoms, Theorem~\ref{thm:bounded} implies that $\lambda_c(\mu)<+\infty$.
\end{example}

\section{Local bounds for the renewal measure} \label{sec:renewal.bounds}

In this section we recall some basic facts from renewal theory and establish local bounds for the renewal measure that will be used in the construction of the regenerative percolation process. Let $(X_i)_{i\ge1}$ be a sequence of i.i.d.\ nonnegative random variables with common distribution $\mu$, and define the renewal epochs
\[
S_n:=\sum_{i=1}^n X_i,
\qquad n\ge1,
\]
with $S_0:=0$. Recall that the associated renewal measure is $U:=\sum_{n\ge0}\mu^{*n}$, equivalently,
\[U(B)=\sum_{n\ge0}\mathbb{P}(S_n\in B)\quad\text{ for every Borel set }B\subseteq[0,+\infty).\]

\bigskip

The next lemma shows that the atomic component of the renewal measure is completely determined by the atomic component of the interarrival distribution.

\begin{lemma}[Stability of the atomic part under convolution]\label{lem:atomic-part-convolution}
Let $\mu$ be an interarrival distribution. Then for every $n\ge0$ the atomic part of $\mu^{*n}$ equals the $n$-fold convolution of the atomic part of $\mu$, i.e.
\[
\bigl(\mu^{*n}\bigr)_{\mathrm{at}} = \mu_{\mathrm{at}}^{*n}.
\]
Consequently, the atomic part of the renewal measure $U=\sum_{n\ge0}\mu^{*n}$ is
\[
U_{\mathrm{at}}=\sum_{n\ge0}\mu_{\mathrm{at}}^{*n}.
\]
\end{lemma}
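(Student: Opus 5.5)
The argument goes by induction on $n$, using one basic fact about convolving two finite measures. Suppose $\alpha,\beta$ are finite Borel measures on $[0,+\infty)$, split as $\alpha=\alpha_{\mathrm{at}}+\alpha_{\mathrm{cont}}$ and $\beta=\beta_{\mathrm{at}}+\beta_{\mathrm{cont}}$. Then
\[
(\alpha*\beta)_{\mathrm{at}}=\alpha_{\mathrm{at}}*\beta_{\mathrm{at}}.
\]
To prove this, expand $\alpha*\beta$ bilinearly into four terms. The term $\alpha_{\mathrm{at}}*\beta_{\mathrm{at}}=\sum_{x,y}\alpha(\{x\})\beta(\{y\})\delta_{x+y}$ is purely atomic, since it is a countable sum of point masses of finite total mass. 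Each of the other three terms involves at least one non-atomic factor, and we show it has no atoms. For instance, for every $z$,
\[
(\alpha_{\mathrm{cont}}*\beta)(\{z\})=\int \alpha_{\mathrm{cont}}(\{z-y\})\,\beta(dy)=0,
\]
and the term $\alpha*\beta_{\mathrm{cont}}$ is handled symmetrically. This is just Fubini applied to the indicator of $\{(x,y):x+y=z\}$. Since the decomposition into purely atomic and non-atomic parts is unique, the identity follows.

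The one point that needs care is uniqueness: a sum of finitely many (or countably many, with finite total mass) non-atomic measures is non-atomic, and a countable sum of weighted point masses is purely atomic. Given that, if $\nu=A+C$ with $A$ purely atomic and $C$ non-atomic, then $\nu(\{x\})=A(\{x\})$ for every $x$, so $A=\nu_{\mathrm{at}}$. This is the main (mild) obstacle. The rest is bookkeeping.

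The induction itself is short. For $n=0$, $\mu^{*0}=\delta_0$ is purely atomic and equals $\mu_{\mathrm{at}}^{*0}$. For the step, apply the convolution fact with $\alpha=\mu^{*n}$ and $\beta=\mu$:
\[
(\mu^{*(n+1)})_{\mathrm{at}}=(\mu^{*n})_{\mathrm{at}}*\mu_{\mathrm{at}}=\mu_{\mathrm{at}}^{*(n+1)}.
\]

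For the renewal measure $U$, which may be infinite, work locally. On each bounded set, or more simply pointwise, write $U=\sum_n(\mu^{*n})_{\mathrm{at}}+\sum_n(\mu^{*n})_{\mathrm{cont}}$. The second sum has no atoms, because $U(\{z\})=\sum_n\mu^{*n}(\{z\})$ by countable additivity, and each $(\mu^{*n})_{\mathrm{cont}}$ gives zero mass to $\{z\}$. The first sum is a countable combination of point masses, so it is purely atomic. Here $U_{\mathrm{at}}$ is understood as $\sum_{x:\,U(\{x\})>0}U(\{x\})\delta_x$, which makes sense for $\sigma$-finite $U$. We then get $U(\{z\})=\sum_n\mu_{\mathrm{at}}^{*n}(\{z\})$ for every $z$, and therefore $U_{\mathrm{at}}=\sum_{n\ge0}\mu_{\mathrm{at}}^{*n}$.
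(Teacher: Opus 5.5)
Your proof is correct and follows the same route as the paper. Both arguments use induction on $n$, the bilinear expansion of $\mu^{*n}*\mu$ into four terms, a Fubini computation showing that any term with a non-atomic factor has no atoms, and a pointwise (Tonelli) summation over $n$ for $U$. You are somewhat more explicit than the paper on two points: the uniqueness of the atomic/non-atomic decomposition, which is what justifies reading off $(\alpha*\beta)_{\mathrm{at}}=\alpha_{\mathrm{at}}*\beta_{\mathrm{at}}$, and the definition of $U_{\mathrm{at}}$ for the possibly infinite measure $U$.
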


\begin{proof}
We prove $(\mu^{*n})_{\mathrm{at}}=\mu_{\mathrm{at}}^{*n}$ by induction on $n$. The base case $n=0$ is trivial since $\mu^{*0}=\delta_0$ and
$\mu_{\mathrm{at}}^{*0}=\delta_0$. Assume the identity holds for some $n\ge0$. Write
\[
\mu^{*(n+1)} = \mu^{*n} * \mu
= (\mu^{*n}_{\mathrm{cont}}+\mu^{*n}_{\mathrm{at}}) * (\mu_{\mathrm{cont}}+\mu_{\mathrm{at}}),
\]
and expand the convolution into four terms. We claim that every term that contains at least one continuous factor is non-atomic. Indeed, if $\nu$ is
any probability measure with $\nu(\{x\})=0$ for all $x$ (i.e. $\nu$ has no atoms), then for any probability measure $\rho$ and any $z\in\mathbb R$
\[
(\rho*\nu)(\{z\})
= \int \rho(\{z-y\})\,\nu(dy)
= \sum_{a\in\mathrm{supp}(\rho_{\mathrm{at}})} \rho(\{a\})\,\nu(\{z-a\}) = 0,
\]
because $\nu(\{z-a\})=0$ for every singleton. Thus convolution with a non-atomic measure produces a non-atomic measure.

Applying this observation to the four terms in the expansion, the only term that can carry atoms is the convolution of the two atomic factors, $\mu^{*n}_{\mathrm{at}}*\mu_{\mathrm{at}}$. By the induction hypothesis,
\[
\bigl(\mu_{\mathrm{at}}^{*n} * \mu_{\mathrm{at}}\bigr)_{\mathrm{at}}
= \mu_{\mathrm{at}}^{*(n+1)}.
\]
This completes the induction.

By Tonelli's theorem, we may interchange the order of summation of the nonnegative atomic masses; hence
\[
U_{\mathrm{at}}=\sum_{n\ge0}\bigl(\mu^{*n}\bigr)_{\mathrm{at}}
=\sum_{n\ge0}\mu_{\mathrm{at}}^{*n}.
\]
as claimed.
\end{proof}

We now show that arithmetic interarrival distributions yield a uniform bound on the local masses of the renewal measure. In the following $\mathbb{N}_0:=\mathbb{N}\cup\{0\}$.

\begin{lemma}[Uniform bound for arithmetic renewal masses]\label{lem:discrete-U-atom-bound}
Let $\mu$ be a non-degenerate arithmetic interarrival distribution with support on $d\mathbb{N}$ for a $d>0$. Then $U=\sum_{n\in\mathbb{N}_0}\mu^{\ast n}$ is supported on $d\mathbb{N}_0$ with
\[
\sup_{k\in\mathbb{N}} U(\{d \cdot k \})< 1.
\]
\end{lemma}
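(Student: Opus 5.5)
Since $\mu$ is supported on $d\mathbb N$, each convolution power $\mu^{*n}$ is supported on $d(\mathbb N_0)$. More precisely, $\mu^{*n}$ lives on $\{dk: k\ge n\}$, because every interarrival is at least $d$. Hence $U$ is supported on $d\mathbb N_0$. For a fixed $k\in\mathbb N$, only the terms $n\le k$ contribute to $U(\{dk\})$. The renewal epochs $S_n$ are strictly increasing, since $X_i\ge d>0$, so at most one $n$ can have $S_n=dk$. Therefore
\[
U(\{dk\})=\sum_{n\ge0}\mathbb P(S_n=dk)=\mathbb P\bigl(\exists\, n:\ S_n=dk\bigr)\le 1 .
\]
The whole task is to turn this into a bound that is strict and uniform in $k$.

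The plan is to exhibit, uniformly in $k$, a definite probability that the renewal sequence jumps over $dk$. Non-degeneracy gives two distinct atoms $dj_1<dj_2$ of $\mu$, with masses $p_1,p_2>0$. The natural event to use is: the renewal sequence visits $d(k-j_2+j_1)$, and the next interarrival equals $dj_2$. On this event the next epoch is $d(k+j_1)>dk$, so $dk$ is skipped. However, the probability of visiting $d(k-j_2+j_1)$ is not bounded below uniformly. For instance, when the mean is infinite the renewal masses tend to $0$.

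A better route is to condition on the last renewal before $dk$. Let $\tau=\max\{n: S_n<dk\}$. This is well defined for $k\ge1$, because $S_0=0<dk$. Hitting $dk$ then means $X_{\tau+1}=dk-S_\tau$. By the strong Markov / renewal decomposition,
\[
\mathbb P(S_\tau=dl,\ X_{\tau+1}=d(k-l))=U(\{dl\})\,\mu(\{d(k-l)\}),
\]
and
\[
\mathbb P(S_\tau=dl)=U(\{dl\})\,\mu\bigl([d(k-l),\infty)\bigr).
\]
Combining these, the conditional probability of hitting $dk$ given $S_\tau=dl$ is
\[
\frac{\mu(\{dr\})}{\mu([dr,\infty))},\qquad r=k-l\ge1,
\]
which is the discrete hazard $h(r)$. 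It follows that
\[
U(\{dk\})\le \sup_{r\ge1,\ \mu([dr,\infty))>0} h(r).
\]
Now $h(r)=1$ exactly when $dr$ is the largest atom of $\mu$. If $\mu$ has unbounded support, every $h(r)<1$, but the supremum could still approach $1$. So this bound alone is not uniform either.

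The step I expect to be the main obstacle is obtaining uniformity, and I would handle it by splitting into large and small $k$. The renewal theorem (Erdős–Feller–Pollard) applied to the span $d'$ of $\mu$ gives $U(\{dk\})\to d'/m$ along the lattice $d'\mathbb N$. Here $d'$ is the gcd, so if $d'>d$ the masses are simply $0$ off $d'\mathbb N$. The limit $d'/m$ is $<1$, because non-degeneracy forces $m>d'$ (the mean strictly exceeds the smallest atom, which is $\ge d'$), and it is $0$ when $m=\infty$. So for all $k$ beyond some $K$ we get $U(\{dk\})\le (1+d'/m)/2<1$. For each of the finitely many $k\le K$, I use the conditioning argument to get $U(\{dk\})<1$. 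Concretely, with $j_1$ the smallest atom and $j_2>j_1$ another atom, either the path starts with $X_1=dj_2$ or with $X_1=dj_1$. Since $j_1\ne j_2$, at least one of these first steps already determines a positive-probability event that misses $dk$. One case is when $k<j_1$, where $dk$ is unreachable. Otherwise, the two branches can be extended by the same continuation, and these land on $dk$ with offsets differing by $d(j_2-j_1)$, so they cannot both land on $dk$. Taking the maximum over finitely many values strictly below $1$, together with the tail bound, gives $\sup_k U(\{dk\})<1$.
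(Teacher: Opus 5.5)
The gap is in your small-$k$ step. Your overall architecture matches the paper's: the renewal theorem controls $U(\{dk\})$ for large $k$, and a pointwise strict bound handles the finitely many remaining $k$. Your large-$k$ argument is correct. Passing to the span $d'$ is actually more careful than the paper, which asserts $U(\{dk\})\to d/m$ for any $d$ with $\supp(\mu)\subseteq d\mathbb N$; that fails, e.g., for support $\{2d,4d\}$, where $U(\{dk\})=0$ for every odd $k$.

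\textbf{The failing step.} You claim that the branches $X_1=dj_1$ and $X_1=dj_2$, followed by the same continuation, ``land on $dk$ with offsets differing by $d(j_2-j_1)$, so they cannot both land on $dk$.'' This is false. The two paths are translates of each other index by index, but they can hit $dk$ at \emph{different} indices. The first branch hits $dk$ iff the continuation visits $d(k-j_1)$, the second iff it visits $d(k-j_2)$, and nothing prevents both. Concretely, take $\mu=\frac12\delta_d+\frac12\delta_{2d}$ and $k=2$. With continuation $X_2=d$, the branch $X_1=d$ hits $2d$ at the second step, and the branch $X_1=2d$ hits it immediately. There is also a sanity check. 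If your claim held for every continuation, it would give $\mathbb P(\text{miss } dk)\ge\min(p_1,p_2)$ uniformly in $k$, i.e.\ $U(\{dk\})\le 1-\min(p_1,p_2)$. Here that bound is $1/2$, whereas $U(\{2d\})=\frac12+\frac14=\frac34$ and $U(\{dk\})\to d/m=2/3$.

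\textbf{Two repairs.} The paper uses a greedy construction. From any current position, the two distinct atoms lead to two distinct points, at most one of which equals $dk$, so you choose the other. Since partial sums increase by at least $d$, after at most $k+1$ steps you have passed $dk$. This gives a cylinder event of positive probability on which $dk$ is never visited.

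Alternatively, keep your two-branch idea but fix the continuation to be $X_2=\dots=X_N=dj_2$ with $N>k$. Then the branch starting with $dj_2$ visits only multiples of $dj_2$, while the branch starting with $dj_1$ visits only points congruent to $dj_1$ modulo $dj_2$. These sets are disjoint because $0<j_1<j_2$. Hence at least one of the two cylinder events, of probabilities $p_1p_2^{N-1}$ and $p_2^{N}$, misses $dk$, and monotonicity of $(S_n)$ rules out later hits. With either repair, the rest of your proof goes through.
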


\begin{proof}
Since $\mu$ is a non-degenerate arithmetic measure, $\supp(\mu) \subseteq d\mathbb{N}$ has at least two elements. Let $m:=d\cdot \sum_{k\in \mathbb{N}}k\cdot\mu(\{d\cdot k\})$. By the discrete renewal theorem (see Feller \cite[XI(1.9)]{feller1991} and recall the notation $m^{-1}=0$ when $m=+\infty$), we have
\[
    U\big(\{d \cdot k \}\big) \longrightarrow \frac{d}{m}
\quad\text{as }k\uparrow+\infty.
\]
Since $m>d$, the limit lies in $[0,1)$.

We now show that $U(\{d \cdot k \})<1$ for each fixed $k$. Fix $k\ge1$. Because $\mu$ is not concentrated on a single point, there exist $N\in \mathbb{N}$ and one finite sequence of integers
\[
(x_1,\dots,x_N)\in \supp(\mu)^N
\]
such that along the corresponding path of partial sums
\[
s_j := x_1+\cdots+x_j,\qquad j=1,\dots,N,
\]
we have $s_j\neq d \cdot k $ for all $j=1,\dots,N$, and moreover $s_N>d \cdot k $. Such a sequence exists because we can always choose $s_1\neq d \cdot k $ and, since $\supp(\mu)$ has at least two elements, if $s_{j-1}<d \cdot k $, then there exists at least one option to have $s_j\neq d \cdot k $.

Let $A_k$ be the event that the first $N$ increments of $(X_n)$ equal this particular sequence:
\[
A_k := \{X_1=x_1,\dots,X_N=x_N\}.
\]
Then $\mathbb{P}(A_k)>0$, since $\mu$ assigns positive mass to each $x_j$. On $A_k$ we have $S_j\neq d \cdot k$ for $1\le j\le N$ and $S_N>d \cdot k $, and since the sequence $(S_n)_{n\in\mathbb{N}}$ is strictly increasing, the process can never hit the level $d \cdot k$ after time $N$. Thus
\[
A_k \subseteq \{S_n\neq d \cdot k  \text{ for all }n\ge0\},
\]
and hence
\[
\mathbb{P}(S_n = d \cdot k  \text{ for some }n\in\mathbb{N})
\le 1 - \mathbb{P}(A_k) < 1.
\]
Therefore $U(\{d \cdot k \})<1$ for every fixed $k$.

Now define
\[
c_1 := \sup_{k\ge K} U(\{d \cdot k \}),
\]
where $K$ is chosen large enough so that
\[
U(\{d \cdot k \}) \le \frac{d}{m} + \varepsilon
\quad\text{for all } k\ge K,
\]
with some $\varepsilon>0$ small enough that
\[
\frac{d}{m} + \varepsilon < 1.
\]
Such $K$ and $\varepsilon$ exist by the convergence $U(\{d \cdot k \})\to d/m$. Then $c_1<1$.

On the finite set $\{0,1,\dots,K-1\}$, we have $U(\{d \cdot k \})<1$ for each $k$, as shown above. Hence
\[
c_0 := \max_{0\le k<K-1} U(\{d \cdot k \}) < 1.
\]
Finally, set $c := \max\{c_0,c_1\} < 1$. Then
\[
U(\{d \cdot k \}) \le c \quad\text{for all }k\ge0.
\]
\end{proof}

The next step is to control the continuous component of the renewal measure.

\begin{lemma}[Uniform smallness of the continuous part]\label{lem:uniform-small-intervals-continuous-part}
Let $U$ be the renewal measure associated to the interarrival distribution $\mu$ and write the Lebesgue decomposition $U = U_{\mathrm{cont}} + U_{\mathrm{at}}$. Then for every $\varepsilon>0$ there exists $\delta>0$ such that
\[
\sup_{a\ge 0} U_{\mathrm{cont}}\bigl((a,a+\delta]\bigr) < \varepsilon.
\]
\end{lemma}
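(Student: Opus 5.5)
We reduce the uniform statement to a bounded window plus a regeneration argument. The key input is the standard subadditivity bound for renewal measures: for every Borel set $B$ and every $a\ge0$,
\[
U(B+a)\le U(B),
\]
which follows from the strong Markov/renewal property at the first renewal epoch $\tau_a:=\inf\{S_n: S_n> a\}$ (or $\ge a$), since the renewal process restarted at $\tau_a$ has the same law. However, this bound applies to $U$, not directly to $U_{\mathrm{cont}}$, so we must split into the contributions of $\mu_{\mathrm{at}}$ and $\mu_{\mathrm{cont}}$. By the expansion used in Lemma~\ref{lem:atomic-part-convolution},
\[
U_{\mathrm{cont}}=\sum_{n\ge1}\bigl(\mu^{*n}-\mu_{\mathrm{at}}^{*n}\bigr),
\]
and each summand is the sum of convolution terms containing at least one factor $\mu_{\mathrm{cont}}$. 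Grouping by the first index $j$ at which a continuous factor appears gives
\[
U_{\mathrm{cont}}=U_{\mathrm{at}}*\mu_{\mathrm{cont}}*U,
\]
a measure identity that we check by expanding both sides as sums of products of $\mu_{\mathrm{at}}$ and $\mu_{\mathrm{cont}}$ factors.

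With this representation,
\[
U_{\mathrm{cont}}\bigl((a,a+\delta]\bigr)=\int\!\!\int \mu_{\mathrm{cont}}\bigl((a-x-y,a-x-y+\delta]\bigr)\,U_{\mathrm{at}}(dx)\,U(dy).
\]
We control this probabilistically rather than term by term. Interpret $U_{\mathrm{cont}}((a,a+\delta])$ as the expected number of indices $n$ such that $S_n\in(a,a+\delta]$ and at least one of $X_1,\dots,X_n$ was drawn from the continuous part. Realize this by writing $X_i$ as a mixture: with probability $p=\mu_{\mathrm{cont}}([0,\infty))$ draw from the normalized continuous law, and otherwise from the normalized atomic law. (If $p=0$ there is nothing to prove.) Let $J$ be the first index with a continuous draw. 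The count is then at most the number of $n\ge J$ with $S_n\in(a,a+\delta]$. Conditionally on $S_{J-1}$, the epochs $S_n$, $n\ge J$, form $S_{J-1}+X_J+$ (an independent renewal process), so by the subadditivity above the conditional expectation is at most
\[
\sup_{b} \mathbb E\bigl[U\bigl((b-X_J,\,b-X_J+\delta]\bigr)\bigr),
\]
with $X_J$ distributed according to the normalized continuous law $\bar\mu_c$.

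The main obstacle is bounding $\sup_b\int U((b-z,b-z+\delta])\,\bar\mu_c(dz)$ by something small uniformly in $b$. We plan to use $U((t,t+\delta])\le U([0,\delta])\le C$ for all $t$, together with the fact that $U((t,t+\delta])=0$ unless $t+\delta\ge0$. Write the integral as $\int \bar\mu_c(b-s-\delta\le z<b-s)\,U(ds)$ via Fubini over the epochs $s$ of the independent renewal process. The epochs are spread out: in any window of length $1$ there are at most $U([0,1])\le C_1<\infty$ of them in expectation, where $C_1$ is finite because $\mu(\{0\})=0$. Hence the integral is at most $C_1\cdot\sup_{s}\sum_k \bar\mu_c(I_{k,s})$ over a family of $\delta$-intervals spaced by roughly $1$. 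This sum is at most the $\bar\mu_c$-mass of a union of $\delta$-intervals, one per unit window. Since $\bar\mu_c$ has no atoms and is tight, a compactness argument makes this sum uniformly small for small $\delta$: on $[0,M]$, uniform continuity of the distribution function $F_c$ gives each interval mass at most $\epsilon/M$, so the total over $[0,M]$ is at most about $\epsilon$, and the tail beyond $M$ contributes at most $\bar\mu_c((M,\infty))<\epsilon$. To keep the number of intervals per unit window bounded, we discretize the $U(ds)$ integral into unit blocks and apply $U(\text{block})\le C_1$. Combining these bounds gives $U_{\mathrm{cont}}((a,a+\delta])\le \mathbb E[\#\cdots]\le C_1(M\cdot\epsilon/M+\epsilon)$, which is small, uniformly in $a$.
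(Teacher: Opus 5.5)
Your argument is correct, but it takes a genuinely different route from the paper.

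\textbf{How the two proofs differ.} The paper splits into large and small $a$. For $a\ge L$ it invokes Blackwell's renewal theorem, which applies because a $\mu$ with nontrivial continuous part is non-arithmetic; this gives $U((a,a+\delta'])<\varepsilon/2$ for the full measure $U$. For $a\in[0,L]$ it uses uniform continuity of $t\mapsto U_{\mathrm{cont}}((0,t])$ on the compact interval $[0,L+\delta']$.

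You instead use the first-continuous-increment decomposition $U_{\mathrm{cont}}=(U_{\mathrm{at}}*\mu_{\mathrm{cont}})*U$. Here $U_{\mathrm{at}}*\mu_{\mathrm{cont}}$ is a probability measure, namely the law of $S_{J-1}+X_J$. This reduces the problem to bounding $\sup_b\int\bar\mu_c((b-s,b-s+\delta])\,U(ds)$, which you control by cutting $s$ into unit blocks and using $U([m,m+1))\le U([0,1])$. Writing $W_m:=(b-m-1,b-m+\delta]$ and $\omega(\delta):=\sup_t\bar\mu_c((t,t+\delta])$, this yields
\[
\sup_{a\ge0}U_{\mathrm{cont}}((a,a+\delta])\le U([0,1])\Bigl[(M+3)\,\omega(\delta)+2\,\bar\mu_c((M,\infty))\Bigr].
\]
The factor $2$ comes from the overlap of consecutive windows $W_m$. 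Note that what the block decomposition really produces is $\sum_m\sup_{s\in[m,m+1)}$, not $\sup_s\sum_k$ as you wrote. Your ``one interval per unit window'' estimate handles this correctly anyway.

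\textbf{What each route buys.} Your route avoids the renewal theorem entirely, and with it any dependence on non-arithmeticity or on the mean. It also gives an explicit modulus in terms of $U([0,1])$ and the tail and continuity modulus of $\mu_{\mathrm{cont}}$. The paper's route is shorter, given Blackwell's theorem, and for large $a$ it controls all of $U$, not just $U_{\mathrm{cont}}$.

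\textbf{Corrections.} One claim you call your ``key input'' is false as stated. The inequality $U(B+a)\le U(B)$ does not hold for general Borel $B$. For example, take $\mu=\tfrac12\delta_1+\tfrac12\delta_2$, $B=\{1/2\}$, $a=1/2$; then $U(B)=0$ while $U(B+a)=U(\{1\})=1/2$. What is true, and what you actually use, is the interval bound $U([t,t+h])\le U([0,h])$ for $t\ge0$, obtained by restarting at the first epoch $\ge t$.

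The step ``by the subadditivity above the conditional expectation is at most \dots'' uses no subadditivity at all. It is an exact identity followed by taking the supremum over $b$.

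Finally, the Fubini interval should be $z\in(b-s,b-s+\delta]$, not $[b-s-\delta,b-s)$. This is immaterial to the argument.

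With these corrections the proof is complete.
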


\begin{proof}
If $\mu$ is purely atomic, then $U_{\mathrm{cont}}\equiv 0$ and the claim is trivial. Hence suppose that $\mu$ is non-atomic. Fix $\varepsilon>0$. The Renewal Theorem (see \cite[XI(1.9)]{feller1991}) implies that, for every fixed $h>0$,
\[
U\bigl((x,x+h]\bigr) \longrightarrow \frac{h}{m}
\qquad\text{as } x\to+\infty,
\]
where the right-hand side is interpreted as $0$ if $m=+\infty$. Choose $\delta'>0$ with $\delta'/m<\varepsilon/4$ and set $h=\delta'$. By the convergence above there exists $L>0$ such that for all $x\ge L$
\[
U\bigl((x,x+\delta']\bigr) < \frac{\delta'}{m} + \frac{\varepsilon}{4}
< \frac{\varepsilon}{2}.
\]
Since $U_{\mathrm{cont}}\le U$ as measures, it follows that for all $x\ge L$
\[
U_{\mathrm{cont}}\bigl((x,x+\delta']\bigr) \le
U\bigl((x,x+\delta']\bigr) < \frac{\varepsilon}{2}.
\]

It remains to control $U_{\mathrm{cont}}((a,a+\delta])$ for $a\in[0,L]$. Define the function
\[
F_{\mathrm{cont}}(t):=U_{\mathrm{cont}}((0,t])\qquad(t\ge0).
\]
Because $U_{\mathrm{cont}}$ has no atoms, $F_{\mathrm{cont}}$ is continuous. Moreover $F_{\mathrm{cont}}$ is finite on compact intervals, hence continuous on the compact interval $[0,L+\delta']$ and therefore uniformly continuous by Heine–Cantor theorem. Consequently there exists $\delta''\in(0,\delta']$ such that for every
$a\in[0,L]$ and every $\delta\in(0,\delta'']$,
\[
U_{\mathrm{cont}}((a,a+\delta]) = F_{\mathrm{cont}}(a+\delta)-F_{\mathrm{cont}}(a)
< \frac{\varepsilon}{2}.
\]

Combining the bounds for $a\in[0,L]$ and for $a\ge L$ we obtain
\[
\sup_{a\ge0} U_{\mathrm{cont}}((a,a+\delta]) \le \frac{\varepsilon}{2} < \varepsilon,
\]
which proves the lemma.
\end{proof}

\medskip
Combining the previous two lemmas allows us to transfer local bounds from the atomic component to the full renewal measure.

\begin{proposition}[Transfer of local control from the atomic part]
\label{prop:uniform-local-U_at}
Let $\mu$ be an interarrival distribution $\mu$ and consider $U$ to be its corresponding renewal measure. Suppose that there exist $\kappa>0$ and $\varepsilon>0$ such that 
\begin{equation}\label{eq:U_at.bound}
\sup_{a\ge 0} U_{\mathrm{at}}\big((a,a+\kappa]\big) < \varepsilon.
\end{equation}
Then there exists $\nu \in(0,\kappa]$ such that
\[
\sup_{a\ge 0} U\big((a,a+\nu]\big) < \varepsilon
\qquad\text{for all } \ a\ge 0.
\]
\end{proposition}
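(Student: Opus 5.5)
The natural approach is to split $U=U_{\mathrm{at}}+U_{\mathrm{cont}}$ and control each part separately on short intervals. The atomic part is already bounded by hypothesis \eqref{eq:U_at.bound}, and the continuous part can be made as small as we like by Lemma~\ref{lem:uniform-small-intervals-continuous-part}. The only subtlety is that the hypothesis is a strict inequality with a supremum. We need slack that is uniform in $a$, and then we spend that slack on the continuous part.

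First set
\[
s:=\sup_{a\ge0}U_{\mathrm{at}}\big((a,a+\kappa]\big)<\varepsilon
\qquad\text{and}\qquad
\varepsilon':=\tfrac12(\varepsilon-s)>0 .
\]
Note that $s$ is finite, so this is legitimate. Apply Lemma~\ref{lem:uniform-small-intervals-continuous-part} with $\varepsilon'$ to obtain $\delta>0$ such that
\[
\sup_{a\ge0}U_{\mathrm{cont}}\big((a,a+\delta]\big)<\varepsilon'.
\]
Then put $\nu:=\min\{\delta,\kappa\}\in(0,\kappa]$.

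Now fix $a\ge0$. Since $\nu\le\kappa$, monotonicity of measures gives
\[
U_{\mathrm{at}}\big((a,a+\nu]\big)\le U_{\mathrm{at}}\big((a,a+\kappa]\big)\le s .
\]
Since $\nu\le\delta$, similarly
\[
U_{\mathrm{cont}}\big((a,a+\nu]\big)\le U_{\mathrm{cont}}\big((a,a+\delta]\big)<\varepsilon'.
\]
Adding the two bounds yields
\[
U\big((a,a+\nu]\big)<s+\varepsilon'=\varepsilon-\varepsilon'.
\]
This bound is uniform in $a$, so the supremum is at most $\varepsilon-\varepsilon'<\varepsilon$, which is the claim.

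The only real obstacle is this uniform slack. If we merely chose $\varepsilon'$ with $U_{\mathrm{at}}((a,a+\kappa])+\varepsilon'<\varepsilon$ separately for each $a$, the resulting supremum could equal $\varepsilon$. Taking $\varepsilon'$ from the supremum $s$ avoids this. We should also check that the decomposition $U=U_{\mathrm{at}}+U_{\mathrm{cont}}$ used here is the one in Lemma~\ref{lem:uniform-small-intervals-continuous-part}, so that the lemma applies. That lemma's proof treats the purely atomic and non-atomic cases of $\mu$. If its conclusion is taken as stated for general $\mu$, nothing further is needed. Otherwise we would redo its argument for mixed $\mu$: use the renewal theorem for $U\ge U_{\mathrm{cont}}$ for large $a$, and uniform continuity of $t\mapsto U_{\mathrm{cont}}((0,t])$ on compacts for the rest.
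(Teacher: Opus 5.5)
Your proof is correct and is essentially the paper's argument: bound $U_{\mathrm{at}}((a,a+\nu])$ by $s=\sup_{a\ge0}U_{\mathrm{at}}((a,a+\kappa])$ via monotonicity, and use Lemma~\ref{lem:uniform-small-intervals-continuous-part} to make $\sup_{a\ge0}U_{\mathrm{cont}}((a,a+\nu])$ smaller than the uniform slack $\varepsilon-s$. The paper uses the full slack, which already suffices because the lemma bounds the supremum strictly, so halving it is harmless but unnecessary; your caveat about mixed $\mu$ in that lemma is resolved by noting that any $\mu$ with a nonzero continuous part is non-arithmetic, so the renewal-theorem argument there applies verbatim.
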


\medskip

\begin{proof}
Fix $\varepsilon>0$ and assume $\kappa>0$ satisfies \eqref{eq:U_at.bound}. Then for any $0<\nu\le \kappa$ and any $a\ge 0$,
\[
U_{\mathrm{at}}((a,a+\nu]) \le \sup_{a\ge 0} U_{\mathrm{at}}((a,a+\kappa]).
\]

Lemma \ref{lem:uniform-small-intervals-continuous-part} ensures the existence of $\nu\in (0,\kappa]$ such that \[\sup_{a \ge 0}U_{\mathrm{cont}}\big( (a,a+\nu] \big)< \varepsilon-\sup_{a \ge 0}U_{\mathrm{at}}((a,a+\kappa]).\] 

Hence, for such $\nu \in (0,\kappa]$, we have
\[
\sup_{a\ge 0}U((a,a+\nu]) 
\le \sup_{a \ge 0}U_{\mathrm{cont}}((a,a+\nu]) + \sup_{a \ge 0}U_{\mathrm{at}}((a,a+\kappa])<\varepsilon.
\]
\end{proof}

\section{Regenerative oriented percolation}\label{sec:regen_perc}

In this section we introduce a class of oriented percolation models with a regenerative structure along vertical columns. These models arise naturally from the graphical representation of the renewal contact process and will provide the main comparison tool in the proofs of the survival results.

Consider the oriented graph $\overrightarrow{\mathbb{L}}=(\mathbb{V},\mathbb{E})$, where the vertex set is given by
\begin{equation}\label{eq:def_v}
\mathbb{V}=\{(x,y)\in \mathbb{Z}\times \mathbb{N}_0:\text{$-y\leq x\leq y$ and $x+y$ is even}\}
\end{equation}
and the edge set is given by
\begin{equation}\label{eq:def_e}
\mathbb{E}=\{(v,v')\in\mathbb{V}^2:\text{$v'=v+(-1,1)$ or $v'=v+(1,1)$}\}.
\end{equation}

A bond percolation configuration on $\overrightarrow{\mathbb L}$ is given by a family of random variables $(X_e)_{e\in\mathbb E}$ taking values in $\{0,1\}$, where $X_e=1$ indicates that the edge $e$ is open. Otherwise, it is closed when $X_e=0$.

As usual, for $v,v'\in \mathbb{V}$ we write $v\to v'$ to denote the event in which there exists a sequence $v=v_0,v_1,\dots,v_k=v'$ such that $e_n=(v_{n-1},v_n)\in\mathbb{E}$ and $X_{e_n}=1$ for all $n\in\{1,...,k\}$. We also write, for  $v\in \mathbb{V}$,
\[\mathcal{C}_v:=\{v'\in\mathbb{V}:v\to v'\}\]
as the cluster starting in $v$. Moreover, we also denote
$\{|\mathcal{C}_0|=+\infty\}$
as the percolation event.

For an edge $e=((x,y),(x',y'))\in\mathbb E$, define its column by
\[
\mathrm{col}(e):=\min\{x,x'\}.
\]
A family of edges is called \textit{colinear} if all of them have the same column. Let $p\in(0,1)$ and consider any bond percolation model on $\overrightarrow{\mathbb{L}}$ that satisfies:
\begin{itemize}
    \item[(I)] $\mathbb{P}(X_{e_i}=0 \text{ for all } i\in\{1,\dots,k\})\le (1-p)^k$ whenever $\{e_1,\dots,e_k\}\subseteq\mathbb{E}$ are distinct and colinear;
    \item[(II)] $X_{e_1},...,X_{e_k}$ are independent whenever $|\mathrm{col}(e_i)-\mathrm{col}(e_j)|\ge 2$ for every $i,j\in\{1,...,k\}$ with $i\neq j$.
\end{itemize}

Note that usual independent site or bond percolation models on $\overrightarrow{\mathbb{L}}$ with parameter $p$ also satisfy the properties above. More importantly for our purposes, this class also includes models with some type of regenerative property given closed edges. In other words, we are interested in the cases where
\begin{equation}\label{eq:regen}
\mathbb{P}(X_{e_k}=0 \mid X_{e_1}=x_1,\dots,X_{e_{k-2}}=x_{k-2},X_{e_{k-1}}=0)\le 1-p
\end{equation}
for distinct colinear $\{e_1,\dots,e_k\}$ and any $(x_1,\dots,x_{k-2})\in\{0,1\}^{k-2}$ as long as the edges are indexed according to their height, with $e_k$ being the highest edge and $e_1$ the lowest (i.e., $e_i=(e_{i,1},e_{i,2})$ for $i=1,...,k$ with $e_{1,k}>e_{1,k-1}>...>e_{1,1}$). 
It is easy to see that this regenerative property implies (II). If we change from $X_{e_{k-1}}=0$ to $X_{e_{k-1}}=1$ in \eqref{eq:regen}, the inequality may no longer hold, which is in some sense analogous to what happens in a renewal process: independence (by regeneration) holds only given that the last known moment has a renewal mark; the event in which the last known moment does not have a mark carries some information of the current interarrival time.

The proof of Proposition~\ref{prop:percol} follows a Peierls-type contour argument. Finite clusters give rise to dual contours crossing the wedge, while assumptions {\rm (I)}--{\rm (II)} allow us to control the probability of such contours through a parity decomposition of the crossed columns. Let $\eta={1}/{2^8}$, as defined in Section \ref{sec:main.results}.

\begin{proposition}\label{prop:percol}
    For any percolation model satisfying $\mathrm{(I)}$ and $\mathrm{(II)}$ with $p>1-\eta$, we have
    \[
    \mathbb{P}(|\mathcal{C}_0|=+\infty)>0.
    \]
\end{proposition}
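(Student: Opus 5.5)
The plan is a Peierls contour argument on the oriented lattice $\overrightarrow{\mathbb L}$, using (I) and (II) in place of independence.

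\textbf{Reduction to contours.} On the event $\{|\mathcal C_0|<+\infty\}$ the cluster is a finite set of vertices in the wedge. Its outer boundary, taken in the planar dual, is a closed dual contour $\gamma$ surrounding $\mathcal C_0$. The standard geometric fact for oriented bond percolation (as in Durrett's proof of $p_c<1$ for oriented percolation) gives two properties. First, every edge of $\overrightarrow{\mathbb L}$ crossed by $\gamma$ in the ``outward'' directions must be closed, since it leaves the cluster. Second, a contour of length $2n$ crosses at least a fixed fraction of closed edges, say at least $n/2$ of them. The number of contours of length $2n$ surrounding the origin is at most $C\cdot 3^{2n}$, or a comparable explicit bound. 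I would take these counting estimates from Appendix~\ref{sec:appendix}. Then
\[
\mathbb P(|\mathcal C_0|<+\infty)\le \sum_{\gamma}\mathbb P(\text{all edges in } E(\gamma) \text{ closed}),
\]
where $E(\gamma)$ is the set of forced-closed edges, with $|E(\gamma)|\ge c|\gamma|$.

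\textbf{Bounding one contour's probability.} Here (I) and (II) replace independence. Partition $E(\gamma)$ by column. Split the columns into even and odd ones. One of the two classes, say even, carries at least $|E(\gamma)|/2$ of the closed edges. Within that class, distinct columns differ by at least $2$. By (II), the families of edges in different columns are then independent. I need independence of the column \emph{families}, not just of single edges. I would read (II) that way: it is stated for arbitrary finite collections whose pairwise column differences are at least $2$, with edges in the same column excepted. If it must be strengthened, the regenerative property \eqref{eq:regen} supplies it. Within a single column, (I) gives probability at most $(1-p)^{k_j}$ for $k_j$ closed colinear edges. Multiplying over the even columns yields
\[
\mathbb P(\text{all } E(\gamma) \text{ closed})\le (1-p)^{|E(\gamma)|/2}\le (1-p)^{c|\gamma|/2}.
\]

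\textbf{Summation and the main obstacle.} Summing gives $\sum_n C\,3^{2n}(1-p)^{cn/2}$. This is less than $1$ once $1-p$ is small enough. The choice $\eta=2^{-8}$ should be exactly what makes the explicit constants work, with the halving from parity costing a square root. A naive sum only covers $p$ close to $1$. To get strict positivity at a given $p>1-\eta$ there are two options. One is to use the sharper count that restricts to contours around the origin. The other is to start from a large initial segment $\{(x,y):y=0,\dots\}$, i.e. a long row of vertices, and then invoke (I) and (II) once more to connect the origin to it with positive probability. The main obstacle is the geometric step: showing rigorously that the dual contour of a finite oriented cluster forces a linear proportion of closed edges, and counting such contours with constants explicit enough to match $\eta=2^{-8}$. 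The parity decomposition itself is routine given (II).
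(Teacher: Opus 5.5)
\textbf{There is a genuine gap: your sketch does not reach the threshold $\eta=2^{-8}$.} Your architecture matches the paper's: a Peierls bound over dual contours, a split of the forced-closed edges by column parity, (II) across columns of one parity, and (I) inside each column. But the statement is quantitative, and your sketch leaves the constants open.

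With your own numbers (at most $C\,3^{2n}$ contours of length $2n$, at least $n/2$ forced-closed edges, then a factor $1/2$ from parity), the series is $\sum_n C\,[9(1-p)^{1/4}]^n$. That is not even finite unless $1-p<3^{-8}$. Finiteness is also not enough: the sum has to be $<1$.

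The paper gets the threshold from two precise facts.
\begin{itemize}
\item It uses dual paths $\gamma$ of length $n$ from $\mathcal L=\{(-y-1,y)\}$ to $\mathcal R=\{(y+1,y)\}$, not closed contours. Let $a,b,c,d$ be the numbers of steps of $\gamma$ in $\mathbb E_*^{\nearrow},\mathbb E_*^{\searrow},\mathbb E_*^{\swarrow},\mathbb E_*^{\nwarrow}$, with $\gamma$ running from $(-y_0-1,y_0)$ to $(y_1+1,y_1)$. The endpoint displacement forces $a-c=y_1+1>0$ and $b-d=y_0+1>0$. Hence $|\mathcal A_\gamma|=a+b\ge n/2$, which is half the length rather than your quarter. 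After the parity split, at least $n/4$ forced-closed edges remain.
\item It counts $|\Gamma_n|\le(n-1)3^{n-2}$: at most $n-1$ starting heights, the first and last steps are forced, and the interior is a non-backtracking walk.
\end{itemize}
Together these give
\[
\mathbb P(|\mathcal C_0|<+\infty)\le\sum_{n\ge2}(n-1)3^{n-2}r^n=\frac{r^2}{(1-3r)^2},\qquad r:=(1-p)^{1/4},
\]
which is $<1$ exactly when $r<1/4$, i.e.\ $1-p<2^{-8}$. If you keep closed contours, you need the analogous sharp fact that exactly half the steps are forced-closed, because up-steps and down-steps balance. You also need an explicit count that includes the starting-point factor. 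Your ``$n/2$ out of $2n$'' loses a square root.

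\textbf{Neither fallback repairs this.} Option (a) is just the counting you have not yet done.

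Option (b) fails for two reasons. First, in $\overrightarrow{\mathbb L}$ the origin is the only vertex at height $0$, so there is no initial row to start from. Second, combining ``the origin reaches a far row'' with ``no contour separates that row from infinity'' needs independence or positive association, and (I)--(II) give neither. Assumption (I) only bounds closed configurations from above; it does not even guarantee that large open configurations have positive probability. For example, place exactly one closed edge, uniformly at random, in each block of $\lceil 1/(1-p)\rceil$ consecutive edges of every column, independently over blocks and columns. This model satisfies (I)--(II), yet each such block is fully open with probability $0$. So the direct Peierls sum must itself be $<1$, and the constants are the whole content of the proposition.

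\textbf{On your reading of (II).} You are right that what is needed is independence between whole columns of the same parity, not just between single edges. The paper also uses this reading implicitly (``applying the same idea for every edge at once''). However, \eqref{eq:regen} cannot supply it. It is a within-column conditional bound: chaining it yields (I), not independence across columns.
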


\begin{proof}

The proof follows from some modifications of the classical Peierls contour argument applied in the context of percolation (see, for example, Grimmett \cite{grimmett1999}).

We will construct another graph $\overrightarrow{\mathbb{L}}_*$, representing the dual graph of $\overrightarrow{\mathbb{L}}$. Let
\[
\mathbb{V}_*:=\{(x,y-1):(x,y)\in\mathbb{V}^2\}\setminus\{(0,-1)\}.
\]

Consider the following sets of edges
\[
\begin{cases}
\mathbb{E}_*^{\nearrow}:=\{(x,y)\in\mathbb{V}_*^{2}:x+(1,1)=y\},\\
\mathbb{E}_*^{\searrow}:=\{(x,y)\in\mathbb{V}_*^{2}:x+(1,-1)=y\},\\
\mathbb{E}_*^{\swarrow}:=\{(x,y)\in\mathbb{V}_*^{2}:x+(-1,-1)=y\},\\
\mathbb{E}_*^{\nwarrow}:=\{(x,y)\in\mathbb{V}_*^{2}:x+(-1,1)=y\},
\end{cases}
\]
and define
\[\mathbb{E}_*:=\mathbb{E}_*^{\nearrow}\cup \mathbb{E}_*^{\searrow} \cup \mathbb{E}_*^{\swarrow}\cup \mathbb{E}_*^{\nwarrow}.\]

We construct the usual bijective correspondence between $\mathbb{E}$ and $\mathbb{E}_*^{\nearrow}\cup \mathbb{E}_*^{\searrow}$. More specifically, to each edge $e=(v,v')\in\mathbb{E}$ we associate the unique dual edge $e_*=(v_*,v'_*)\in \mathbb{E}_*^{\nearrow}\cup \mathbb{E}_*^{\searrow}$ that is crossed by $e$, that is: if $v'=v+(1,1)$, then $e_*=(v+(1,0),v+(0,1))\in \mathbb{E}_*^{\searrow}$; if $v'=v+(-1,1)$, then $e_*=(v+(-1,0),v+(0,1))\in \mathbb{E}_*^{\nearrow}$. Moreover, we couple the primal and dual models so that $\{e\text{ open}\}\iff \{e_*\text{ closed}\}$. All the edges in $\mathbb{E}_*^{\swarrow}\cup \mathbb{E}_*^{\nwarrow}$ are considered open in every situation. 

Define the left and right sides of the dual wedge by
\[
\mathcal L:=\{(-y-1,y):y\ge0\},
\qquad
\mathcal R:=\{(y+1,y):y\ge0\}.
\]

\begin{claim}[Dual contour criterion]\label{claim:dual-contour}
If $|\mathcal{C}_0|<+\infty$, then there exists a self-avoiding open path in the dual graph $\overrightarrow{\mathbb{L}}_*$ connecting $\mathcal L$ to $\mathcal R$, the left and right sides of the wedge, and separating $\mathcal{C}_0$ from infinity.
\end{claim}
\begin{proof}\renewcommand{\qedsymbol}{$\blacksquare$}
Assume that $|\mathcal{C}_0|<+\infty$. Consider the set of dual edges crossing primal edges having exactly one endpoint in $\mathcal{C}_0$. By the primal--dual coupling, every such dual edge is open.

These dual edges form the external boundary of $\mathcal{C}_0$ in the dual graph. Since the primal graph is planar and $\mathcal{C}_0$ is finite, this boundary separates $\mathcal{C}_0$ from infinity. Moreover, because $\mathcal{C}_0$ is contained in the wedge, the boundary must connect the two sides of the wedge.

Finally, removing loops from the boundary path yields a self-avoiding open dual path with the same endpoints.
\end{proof}

We now formalise the contours that appear in Claim~\ref{claim:dual-contour}.
For $n\ge2$, let
\[\Gamma_n=\left\{{\begin{array}{c}
    \gamma=(\gamma_1,\dots,\gamma_{n+1}) \text{ self-avoiding dual paths with }\;(\gamma_i,\gamma_{i+1})\in\mathbb E_*\\
\;\text{for all }1\le i<n+1 \;\text{ such that }\;\gamma_1\in \mathcal L \;\text{ and }\;\gamma_{n+1}\in \mathcal R\\
    
\end{array}}\right\}.\]
A path $\gamma\in\Gamma_n$ is called \emph{open} if every dual edge belonging to it is open in the coupled model. 
By Claim~\ref{claim:dual-contour},
\[
\{|\mathcal{C}_0|<+\infty\} \;\subseteq\; \bigcup_{n\ge 2}\,\bigl\{ \text{there exists an open }\gamma\in\Gamma_n \bigr\},
\]
and in fact the two events are equal (the reverse inclusion is obvious because an open dual contour prevents infinite expansion of $\mathcal{C}_0$). 
An illustration of a contour is given in Figure~\ref{fig:periels}. Note that edges in $\mathbb{E}_*^\nearrow$, by construction, prevent $\mathcal{C}_0$ from expanding in the direction $(-1,1)$, while edges in $\mathbb{E}_*^\searrow$ prevent the expansion of $\mathcal{C}_0$ in the direction $(1,1)$. Moreover, since we are considering percolation on the oriented graph $\overrightarrow{\mathbb{L}}$, $\mathcal{C}_0$ cannot expand in the directions $(-1,-1)$ or $(-1,1)$; therefore, edges in $\mathbb{E}_*^\swarrow \cup \mathbb{E}_*^\nwarrow$ may be considered open in every situation.

\begin{figure}[h]
\centering
\includegraphics[width=10cm]{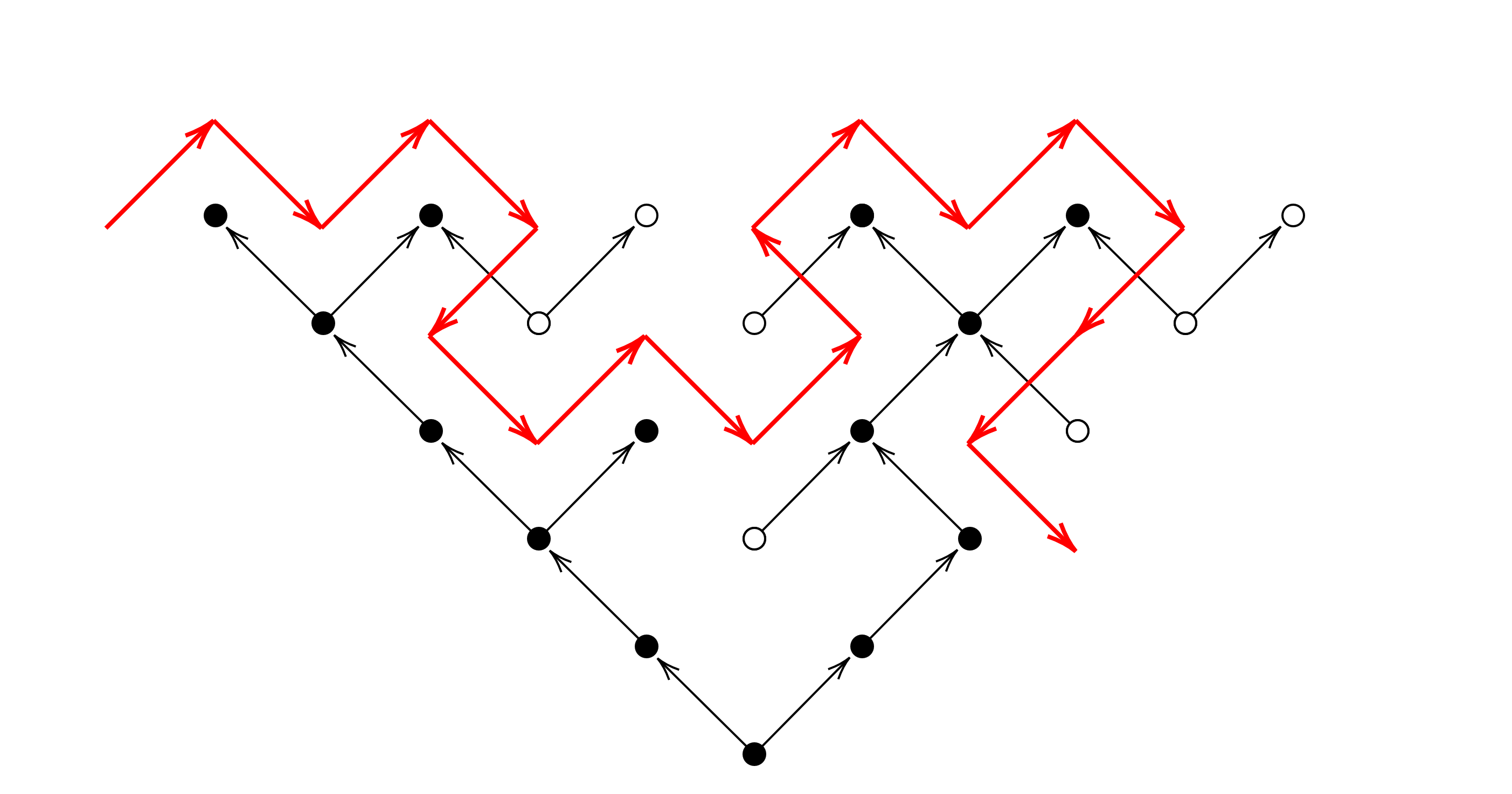}
\caption{Illustration of a contour. Black balls are vertices in $\mathcal{C}_0$ and white balls are the remaining ones. The red arrows represent the edges of an open path $\gamma\in \Gamma_{17}$.}\label{fig:periels}
\end{figure}

Therefore,
\begin{equation}\label{eq:p_contorno}
\mathbb{P}(|\mathcal{C}_0|< +\infty)
\le
\sum_{n\ge 2}\sum_{\gamma\in\Gamma_n} \mathbb{P}(\text{$\gamma$ open}).
\end{equation}

By Proposition~\ref{prop:peierls},we have
\begin{equation}
    |\Gamma_n|\le (n-1) 3^{n-2}.
\end{equation}

Fix $\gamma=(\gamma_1,\dots,\gamma_{n+1})\in \Gamma_n$. Note that the set
\[
\mathcal{A}_\gamma:=\{(\gamma_i,\gamma_{i+1})\in\mathbb{E}_*^{\nearrow}\cup\mathbb{E}_*^{\searrow}: i\in\{1,\dots,n\}\}.
\]

To exploit the independence assumption {\rm (II)}, we partition the dual edges according to the parity of their associated columns, obtaining the decomposition
\[
\mathcal{A}_\gamma=\mathcal{E}_\gamma\cup\mathcal{O}_\gamma,
\]
where
\[
\mathcal{E}_\gamma:=\{(\gamma_i,\gamma_{i+1})\in\mathbb{E}_*^{\nearrow}\cup\mathbb{E}_*^{\searrow}: i\in\{1,\dots,n\} \text{ and $\mathrm{col}\bigl((\gamma_i,\gamma_{i-1})\bigr)$ is even}\},
\]
\[
\mathcal{O}_\gamma:=\{(\gamma_i,\gamma_{i+1})\in\mathbb{E}_*^{\nearrow}\cup\mathbb{E}_*^{\searrow}: i\in\{1,\dots,n\} \text{ and $\mathrm{col}\bigl((\gamma_i,\gamma_{i-1})\bigr)$ is odd}\}.
\]

It is easy to note that, since $\gamma$ must travel from the left to the right side (see Figure~\ref{fig:periels}), then $\gamma$ has more edges in $\mathbb{E}_*^{\nearrow}$ than in $\mathbb{E}_*^{\swarrow}$ and more edges in $\mathbb{E}_*^{\searrow}$ than in $\mathbb{E}_*^{\nwarrow}$.
Thus $|\mathcal{A}_\gamma|\ge n/2$, and we have either $|\mathcal{E}_\gamma|\ge n/4$ or $|\mathcal{O}_\gamma|\ge n/4$. Without loss of generality, assume that $|\mathcal{E}_\gamma|\ge n/4$ and write
\begin{equation}
\mathcal{E}_\gamma=\bigcup_{k\in 2\mathbb{Z}} \mathcal{E}^{(k)}_\gamma,
\quad
\mathcal{E}^{(k)}_\gamma:=\{(\gamma_i,\gamma_{i+1})\in\mathbb{E}_*^{\nearrow}\cup\mathbb{E}_*^{\searrow}: i\in\{1,\dots,n\} \text{ and $\mathrm{col}\bigl((\gamma_i,\gamma_{i-1})\bigr)=k$}\}.
\end{equation}

Note that for $i,j\in\{1,\dots,n\}$ and $k,k'\in 2\mathbb{Z}$ with $k\neq k'$, we have by property (II) that $X_{(\gamma_i,\gamma_{i+1})}$ is independent of $X_{(\gamma_j,\gamma_{j+1})}$ whenever $(\gamma_i,\gamma_{i+1})\in \mathcal{E}^{(k)}_\gamma$ and $(\gamma_j,\gamma_{j+1})\in \mathcal{E}^{(k')}_\gamma$, since $|k-k'|\ge 2$. Applying the same idea for every edge in $\mathcal{E}_\gamma$ at once, we obtain

\begin{equation}
\mathbb{P}(\text{$\gamma$ open})
\le
\prod_{k\in 2\mathbb{Z}:\,\mathcal{E}^{(k)}_\gamma\neq\emptyset}
\mathbb{P}(\text{every edge in $\mathcal{E}^{(k)}_\gamma$ is open}).
\end{equation}

By construction, any edges $e^*_1,\dots,e^*_j\in \mathcal{E}^{(k)}_\gamma$ have corresponding primal edges $e_1,\dots,e_j$ that are colinear. By property (I) of the percolation model,
\begin{equation}\label{eq:ev_edge_open}
P(\text{every edge in $\mathcal{E}^{(k)}_\gamma$ is open})
\le
(1-p)^{|\mathcal{E}^{(k)}_\gamma|}.
\end{equation}

By displays \eqref{eq:p_contorno}-\eqref{eq:ev_edge_open} and the assumption that $|\mathcal{E}_\gamma|\ge n/4$,
\[\mathbb{P}(|\mathcal{C}_0|< +\infty)\le \sum_{n\ge 2}|\Gamma_n|(1-p)^{n/4}\le \sum_{n\ge 2}(n-1)[3(1-p)^{1/4}]^n,\]
which, by Lemma~\ref{lem:threshold} in the Appendix, is strictly less than $1$ for $p> 1-\eta$.
\end{proof}

\section{Proofs of the main results} \label{sec:proofs}

\begin{proof}[Proof of Theorem \ref{thm:arithmetic}]
The proof proceeds by constructing a dependent oriented percolation process on the graph introduced in Section~\ref{sec:regen_perc}. The arithmetic structure of the renewal process yields regeneration at deterministic times, while large infection rates guarantee sufficiently high crossing probabilities between neighbouring blocks.

Let $d>0$ be such that $\mathrm{supp}(\mu)\subseteq d\mathbb{N}$.  
Since $\mu$ is non-degenerate and arithmetic, Lemma~\ref{lem:discrete-U-atom-bound} ensures that
\[
   c  := \sup_{k\in\mathbb{N}} U(\{d \cdot k \}) < 1.
\]
Fix $M\in\mathbb{N}$ such that $c ^M < \eta/2$. Let us write a partition of $\mathbb{Z}$ into blocks of size $M$ as
\[B_k:=\{kM,kM+1, \dots, kM+M-1\}\quad \text{for all }\; k\in\mathbb{Z}\]
and fix the event
\[\mathcal{B}_{k,\ell}:=\left\{\begin{array}{cc}\text{at least one vertex in }B_k\text{ does not have a renewal mark at time }(\ell+1) d
\end{array}\right\}.\]
Then, we obtain for all $k\in\mathbb{Z}$ and $\ell\in\mathbb{N}$,
\begin{equation}\label{eq:renewal.block.event}
\mathbb{P}\left(\mathcal{B}^c_{k,
\ell}\right)\le c^M<\eta/2.
\end{equation}

Define also the events
\[\mathcal{A}^{\nearrow}_{k,\ell}:=\left\{
\begin{array}{c}
    \text{there is an infection path from vertex }kM\text{ to vertex }(k+2)M-1\\
    \text{ within the time interval } \big(\ell\, d, ~(\ell+1)d\big) 
\end{array}
\right\},
\]
\[\mathcal{A}^{\nwarrow}_{k,\ell}:=\left\{
\begin{array}{c}
    \text{there is an infection path from vertex }(k+1)M-1\text{ to vertex }(k-1)M\\
    \text{ within the time interval } \big(\ell\, d, ~(\ell+1)d\big) 
\end{array}
\right\}.
\]

Note that the events $\mathcal{A}^{\nwarrow}_{k,\ell}$ and $\mathcal{A}^{\nearrow}_{k,\ell}$ depend only on the infection marks, while $\mathcal{B}_{k,\ell}$ depends only on the cure marks. Moreover, the event $\mathcal{A}_{k,\ell}^{\nearrow}$ requires that a sequence of $2M-1$ rightward infections occur sequentially (see Figure \ref{fig:infection_M}). Since, sequentially, the time between each infection is an exponential with parameter $\lambda$, it follows, by comparison with Poisson Processes, that
\[\mathbb{P}(\mathcal{A}^{\nearrow}_{k,
\ell})=\mathbb{P}(\textrm{Poisson}(\lambda\, d)\ge 2M-1).\]

\begin{figure}[h]
\centering
\includegraphics[width=15cm]{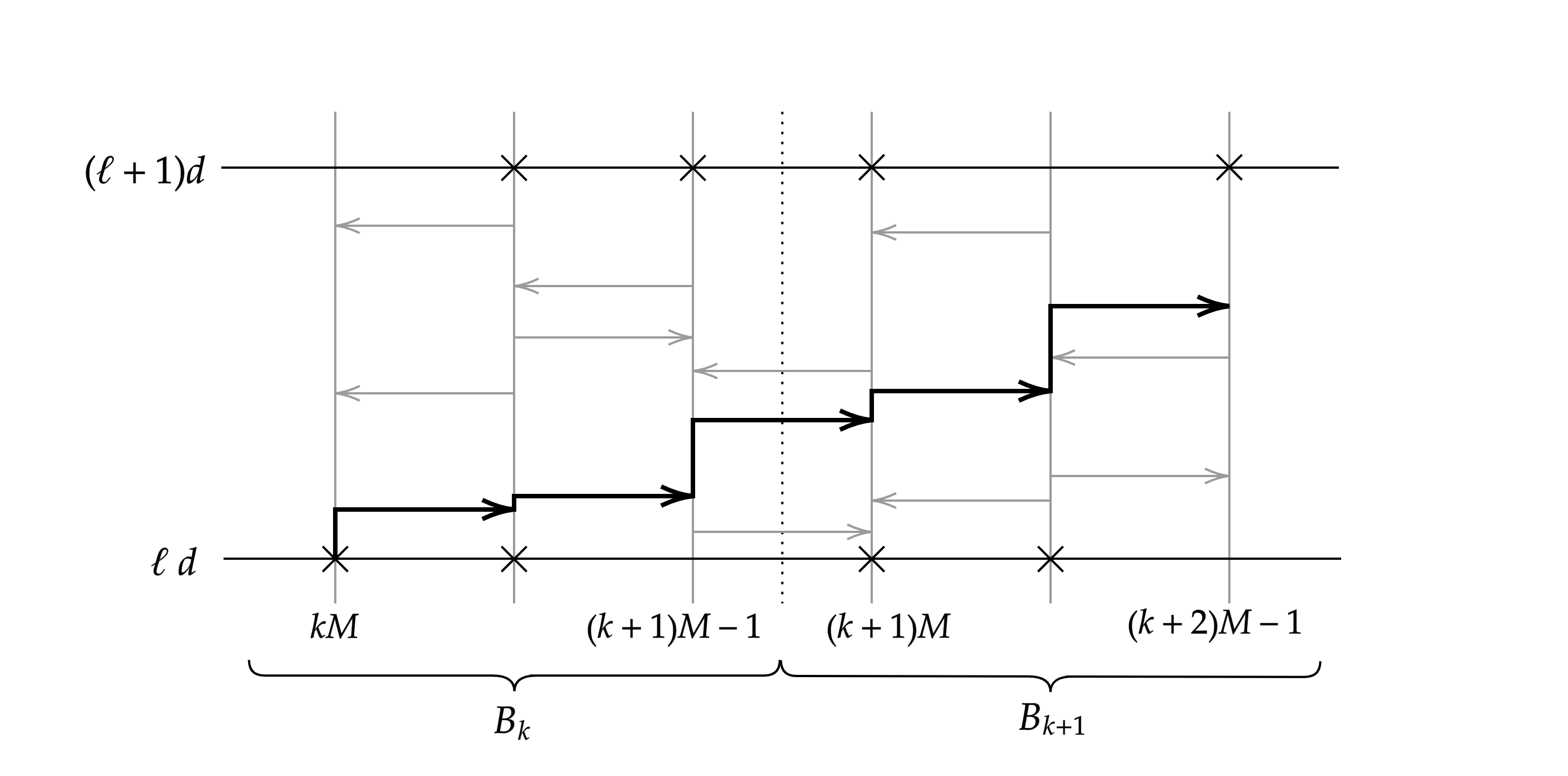}
\caption{Pictorial representation of the event $\mathcal{A}_{k,\ell}^{\nearrow}\cap\mathcal{B}_{k+1,\ell}$ with $M=3$.}\label{fig:infection_M}
\end{figure}

Choose $\lambda = \lambda(M,d,\eta)$ sufficiently large so that for $k,\ell\in\mathbb{N}$, 
\begin{equation}\label{eq:renewal.infection.event}
\mathbb{P}\left((\mathcal{A}^{\nwarrow}_{k,
\ell})^c\right)=\mathbb{P}\left((\mathcal{A}^{\nearrow}_{k,
\ell})^c\right)<\eta/2.
\end{equation}

Consider the set $\mathbb{E}$ as defined in \eqref{eq:def_e} and split
\[\mathbb{E}=\mathbb{E}_\nwarrow \cup \mathbb{E}_\nearrow,\]
where
\[\mathbb{E}_\nwarrow:=\{(v,v')\in\mathbb{V}^2:\text{$v'=v+(-1,1)$}\},\]
\[\mathbb{E}_\nearrow:=\{(v,v')\in\mathbb{V}^2:\text{$v'=v+(1,1)$}\}.\]

Define, for any edge $e=(v,v')\in \mathbb{E}$ with $v=(k,\ell)\in \mathbb{V}$,
\begin{equation}\label{eq:def_d}
\mathcal{D}_{e}:=\begin{cases}
\mathcal{B}_{k-1,\ell}\cap \mathcal{A}_{k,\ell}^\nwarrow,&\text{if $e\in \mathbb{E}_\nwarrow$};\\
\vspace{-.3cm}\\
 \mathcal{B}_{k+1,\ell}\cap \mathcal{A}_{k,\ell}^\nearrow,&\text{if $e\in \mathbb{E}_\nearrow$}.
\end{cases}\end{equation}

Observe that $kM$ is the first vertex of $B_k$, while $(k+2)M - 1$ is the last vertex in $B_{k+1}$ (see Figure~\ref{fig:infection_M}). Moreover, since $\mu$ is arithmetic, there are no cure marks in the interval $(\ell\,d, (\ell+1)d)$. Therefore, on the event $\mathcal{A}_{k,\ell}^{\nearrow}$, there is an infection path from any vertex $v \in B_k$ to any vertex $v' \in B_{k+1}$ within the time interval $(\ell\,d, (\ell+1)d)$. If, in addition, some vertex in $B_k$ is infected at time $\ell\,d$ and $\mathcal{B}_{k+1,\ell}$ occurs, then there will be at least one vertex in $B_{k+1}$ infected at time $(\ell+1)d$.

We consider a bond percolation model on $\overrightarrow{\mathbb{L}}$ as defined in Section~\ref{sec:regen_perc}. In this percolation model, for any $e\in \mathbb{E}$, we take $X_e = 1$ if and only if the event $\mathcal{D}_e$ occurs. Note, by the previous observations, that the percolation event implies survival of the infection in the renewal contact process. Therefore, our task is to prove that $\mathbb{P}(|\mathcal{C}_0|= +\infty)>0$ in this percolation model, which is achieved with the help of Proposition~\ref{prop:percol} and by proving that this percolation model satisfies properties (I) and (II).

Property (II) is easily verified, since $|\mathrm{col}(e_i)-\mathrm{col}(e_j)|\ge 2$ for all $i,j\in\{1,...,k\}$ with $i\neq j$, then $\{\mathcal{D}_{e_i}\}_{i\in\{1,...,k\}}$ are mutually independent as they do not share events regarding the same renewal process.

We now proceed to prove property (I). Note that by \eqref{eq:renewal.block.event}-\eqref{eq:def_d}, for every $e\in \mathbb{E}$, \[\mathbb{P}(X_e=0)=\mathbb{P}(\mathcal{D}^c_e)< \eta.\]

Now consider two edges $e,e' \in \mathbb{E}$ that are colinear with $e \neq e'$. We first consider the case in which $(e,e') \in \mathbb{E}_{\nearrow} \times \mathbb{E}_{\nearrow}$. Since they are colinear, we have that $e = ((k,\ell),(k+1,\ell+1))$ and $e' = ((k,\ell'),(k+1,\ell'+1))$ for some $k,\ell,\ell' \in \mathbb{N}$, where, without loss of generality, we assume $\ell < \ell'$. Then, \eqref{eq:def_d} yields
\[
\mathbb{P}(\mathcal{D}^c_{e}\cap \mathcal{D}^c_{e'})\le\mathbb{P}\Big((\mathcal{A}^{\nearrow}_{k,\ell})^c\cap (\mathcal{A}^{\nearrow}_{k,\ell'})^c\Big)+\mathbb{P}\Big(\mathcal{B}_{k+1,\ell}^c\cap (\mathcal{A}^{\nearrow}_{k,\ell'})^c\Big)+\mathbb{P}\Big((\mathcal{A}^{\nearrow}_{k,\ell})^c\cap \mathcal{B}_{k+1,\ell'}^c\Big)+\mathbb{P}\Big(\mathcal{B}^c_{k+1,\ell}\cap \mathcal{B}^c_{k+1,\ell'}\Big).\] 

The sum above has 4 terms. We shall prove that each of them is has an upper bound of $\eta^2/4$ and, as a result, $\mathbb{P}(\mathcal{D}^c_{e}\cap \mathcal{D}^c_{e'})< \eta^2$.

Conditioned on $\mathcal{B}^c_{k+1,\ell}$, all vertices in $B_{k+1}$ have cure marks at time $(\ell+1)d$. By the regenerative property of renewal processes applied simultaneously to all these vertices, we have that $\mathbb{P}(\mathcal{B}^c_{k+1,\ell'} \mid \mathcal{B}^c_{k+1,\ell}) = \mathbb{P}(\mathcal{B}^c_{k+1,\ell' - \ell})$. Therefore,

\[\mathbb{P}(\mathcal{B}^c_{k+1,\ell}\cap\mathcal{B}^c_{k+1,\ell'})=\mathbb{P}(\mathcal{B}^c_{k+1,\ell})\mathbb{P}(\mathcal{B}^c_{k+1,\ell'-\ell})< \eta^2/4.
\]
By independence between events, it is easy to show that all the other terms of the sum are also upper bounded by $\eta^2/4$. Hence, $\mathbb{P}(\mathcal{D}_{e} \cap \mathcal{D}_{e'}) < \eta^2$ whenever $(e,e') \in \mathbb{E}_{\nearrow} \times \mathbb{E}_{\nearrow}$ are colinear with $e \neq e'$.

Observe that analogous arguments can be used for the cases $(e,e') \in \mathbb{E}_{\nearrow} \times \mathbb{E}_{\nwarrow}$, $(e,e') \in \mathbb{E}_{\nwarrow} \times \mathbb{E}_{\nearrow}$, or $(e,e') \in \mathbb{E}_{\nwarrow} \times \mathbb{E}_{\nwarrow}$. Note also that analogous arguments apply to any number of distinct colinear edges, yielding
\[
\mathbb{P}(\mathcal{D}^c_{e_1},\ldots,\mathcal{D}^c_{e_k}) < \eta^k, \qquad \text{for any distinct colinear } e_1,\ldots,e_k \in \mathbb{E}.
\]

Therefore, property (I) holds and we can apply Proposition~\ref{prop:percol} to show that the renewal contact process survives with positive probability for the chosen $\lambda$. This implies $\lambda_c<+\infty$.

\end{proof}

\begin{proof}[Proof of Theorem \ref{thm:bounded}]

The proof follows the same comparison scheme as in Theorem~\ref{thm:arithmetic}. The local control of the renewal measure provided by Proposition~\ref{prop:uniform-local-U_at} replaces the deterministic regeneration times available in the arithmetic setting. %The proof follows by arguments similar to those used in the proof of Theorem \ref{thm:arithmetic}, namely, by comparison with the percolation models defined in Section~\ref{sec:regen_perc}. 

By the combination of the hypothesis and Proposition~\ref{prop:uniform-local-U_at}, there exists $\nu>0$ such that
\[
   u_\nu:=\sup_{a\ge 0} U\big( (a,a+\nu] \big) \;<\; \eta/2.
\]
Fix such a $\nu$. For all $j\in\mathbb{Z}$ and $\ell\in \mathbb{N}_0$, define
\[
\mathcal{B}'_{j,\ell}=\{\text{there is no recovery mark at vertex $j$ during the time interval $\big(\ell\, \nu, ~(\ell+1)\nu]$}\}
\]
and note that, by Markov inequality,
\begin{equation}\label{eq:p_b}
\begin{aligned}
\mathbb{P}\big((\mathcal{B}'_{j,\ell})^c\big)&\le\mathbb{E}(\text{number of recovery marks at vertex $j$ during the time interval $\big(\ell\, \nu, ~(\ell+1)\nu]$})\\
&=U\big((\ell\, \nu, ~(\ell+1)\nu]\big)\\
&\le u_\nu.
\end{aligned}
\end{equation}

Define the events
\[
\mathcal{A}'^{\nearrow}_{j,\ell}:=\left\{
\begin{array}{c}
    \text{there is an attempted rightward infection from vertex }j\text{ to vertex }j+1\\
    \text{ within the time interval } \big(\ell\, \nu, ~(\ell+1)\nu\big] 
\end{array}
\right\},
\]
\[
\mathcal{A}'^{\nwarrow}_{j,\ell}:=\left\{
\begin{array}{c}
    \text{there is an attempted leftward infection from vertex }j\text{ to vertex }j-1\\
    \text{ within the time interval } \big(\ell\, \nu, ~(\ell+1)\nu\big]  
\end{array}
\right\}.
\]

It is easy to see that 
\[
\mathbb{P}(\mathcal{A}'^{\nearrow}_{j,\ell})=\mathbb{P}(\mathcal{A}'^{\nwarrow}_{j,\ell})=\mathbb{P}(\textrm{Poisson}(\lambda\, \nu)\ge 1).
\]
Choose $\lambda = \lambda(u_\nu,\nu,\eta)$ sufficiently large such that
\begin{equation}\label{eq:p_a}
\mathbb{P}\big((\mathcal{A}'^\nearrow_{j,\ell})^c\big)< \eta -2u_\nu.
\end{equation}

Consider
\[
\mathbb{E}:=\mathbb{E}_\nwarrow \cup \mathbb{E}_\nearrow,
\]
where
\[
\mathbb{E}_\nwarrow:=\{(v,v')\in\mathbb{V}^2:\text{$v'=v+(-1,1)$}\},
\]
\[
\mathbb{E}_\nearrow:=\{(v,v')\in\mathbb{V}^2:\text{$v'=v+(1,1)$}\}.
\]

Define, for any edge $e=(v,v')\in \mathbb{E}$ with $v=(k,\ell)\in \mathbb{V}$,
\begin{equation}\label{eq:def_d'}
\mathcal{D}'_{e}:=\begin{cases}
\mathcal{B}'_{k-1,\ell}\cap \mathcal{B}'_{k,\ell}\cap \mathcal{A}_{k,\ell}'^\nwarrow,&\text{if $e\in \mathbb{E}_\nwarrow$};\\
\vspace{-.3cm}\\
 \mathcal{B}'_{k,\ell}\cap \mathcal{B}'_{k+1,\ell}\cap \mathcal{A}_{k,\ell}'^\nearrow,&\text{if $e\in \mathbb{E}_\nearrow$},
\end{cases}
\end{equation}
the event that, during the interval $(\ell\, \nu,(\ell+1)\nu]$, there is an attempted infection from vertex $k$ to $k-1$ (or $k+1$) and there are no recovery marks at either $k$ or $k-1$ (or $k+1$). In particular, if vertex $k$ is infected  immediately after time $\ell\, \nu$ and $\mathcal{D}'_e$ occurs, then vertex $k-1$ (or $k+1$) will be infected right after time $(\ell +1)\nu$.

As in the proof of Theorem \ref{thm:arithmetic}, we consider a bond percolation model on $\overrightarrow{\mathbb{L}}$ where, for any $e\in \mathbb{E}$, we take $X_e = 1$ if and only if the event $\mathcal{D}'_e$ occurs. Note that the percolation event implies survival of the infection in the renewal contact process. Therefore, the proof is complete by Proposition~\ref{prop:percol} once we show that this percolation model satisfies properties (I) and (II).

As in Theorem \ref{thm:arithmetic}, property (II) is easily verified, and we now proceed to prove property (I). Observe by \eqref{eq:p_b}--\eqref{eq:def_d'}, for every $e\in \mathbb{E}$,
\[
\mathbb{P}(X_e=0)=\mathbb{P}\big((\mathcal{D}'_e)^c\big)< \eta.
\]

Now consider two edges $e,e' \in \mathbb{E}$ that are colinear with $e \neq e'$. We first consider the case in which $(e,e') \in \mathbb{E}_{\nearrow} \times \mathbb{E}_{\nearrow}$. Since they are colinear, we have $e = ((k,\ell),(k+1,\ell+1))$ and $e' = ((k,\ell'),(k+1,\ell'+1))$ for some $k,\ell,\ell' \in \mathbb{N}$, where, without loss of generality, we assume $\ell < \ell'$. Then, \eqref{eq:def_d'} yields
\begin{equation}\label{eq:inter_d'}
\begin{aligned}
\mathbb{P}((\mathcal{D}'_{e})^c\cap (\mathcal{D}'_{e'})^c\big)\le&\mathbb{P}\Big((\mathcal{A}'^{\nearrow}_{j,\ell})^c\cap (\mathcal{A}'^{\nearrow}_{j,\ell'})^c\Big)+\sum_{i=j}^{j+1}\mathbb{P}\Big((\mathcal{B}'_{i,\ell})^c\cap (\mathcal{A}'^{\nearrow}_{j,\ell'})^c\Big)\\
&+\sum_{i=j}^{j+1}\mathbb{P}\Big((\mathcal{A}'^{\nearrow}_{j,\ell})^c\cap (\mathcal{B}'_{i,\ell'})^c\Big)
+\sum_{i=j}^{j+1}\sum_{i'=j}^{j+1}\mathbb{P}\Big((\mathcal{B}'_{i,\ell})^c\cap (\mathcal{B}'_{i',\ell'})^c\Big).
\end{aligned}
\end{equation}

It is possible that there are some recovery marks at vertex $i$ during the interval $(\ell\, \nu,(\ell+1)\nu]$. Define $Z_{i,\ell}$ as the time of the first such mark, with $Z_{i,\ell}:=-\infty$ if there is no recovery mark at vertex $i$ during this time interval. Conditioning on $Z_{i,\ell}=t\in (\ell\, \nu,(\ell+1)\nu]$ and using the regenerative property, we may consider that the subsequent recovery marks follow a new independent renewal process starting from $t$. Therefore, analogously to (\ref{eq:p_b}), we have for $t\in (\ell\, \nu,(\ell+1)\nu]$,
\[
\begin{aligned}
\mathbb{P}\big((\mathcal{B}'_{i,\ell'})^c \mid Z_{i,\ell}=t\big)
&= \mathbb{P}\begin{pmatrix}
\text{there is at least one recovery mark at vertex $i$ during}\\
\text{the time interval $\big(\ell' \nu-t,(\ell'+1)\nu-t\big]$}
\end{pmatrix}\\
&\le u_\nu.
\end{aligned}
\]

Conditioned on $(\mathcal{B}'_{i,\ell})^c$, we have $Z_{i,\ell}=t$ for some $t\in(\ell \nu,(\ell+1)\nu]$. Therefore,
\[
\mathbb{P}\big((\mathcal{B}'_{i,\ell'})^c \mid (\mathcal{B}'_{i,\ell})^c\big)\le u_\nu
\]
and
\[
\mathbb{P}\big((\mathcal{B}'_{i,\ell})^c\cap(\mathcal{B}'_{i,\ell'})^c\big)
=\mathbb{P}\big((\mathcal{B}'_{i,\ell})^c\big)\mathbb{P}\big((\mathcal{B}'_{i,\ell'})^c \mid (\mathcal{B}'_{i,\ell})^c\big)
\le u^2_\nu.
\]

The probabilities of every other intersection in \eqref{eq:inter_d'} are easily bounded using independence of events together with \eqref{eq:p_b} and \eqref{eq:p_a}. Thus, we obtain
\[
\mathbb{P}\big((\mathcal{D}'_{e})^c\cap (\mathcal{D}'_{e'})^c\big)< (\eta-2u_\nu)^2+4(\eta-2u_\nu)u_\nu+4u_\nu^2=\eta^2.
\]

Hence,
\(
\mathbb{P}((\mathcal{D}'_{e})^c\cap (\mathcal{D}'_{e'})^c) < \eta^2
\)
whenever $(e,e') \in \mathbb{E}_{\nearrow} \times \mathbb{E}_{\nearrow}$ are colinear with $e \neq e'$. Note that analogous arguments apply to the cases $(e,e') \in \mathbb{E}_{\nearrow} \times \mathbb{E}_{\nwarrow}$, $(e,e') \in \mathbb{E}_{\nwarrow} \times \mathbb{E}_{\nearrow}$, or $(e,e') \in \mathbb{E}_{\nwarrow} \times \mathbb{E}_{\nwarrow}$. 

Now, consider $k>2$ and $(e_1,...,e_k)\in \mathbb{E}_{\nearrow}^k$ colinear. Then we may write $e_i=(j,\ell_i)$, $i=1,...,k$, for some $j$ and a sequence of distinct $\ell_1,...,\ell_k$, which we denote generically by $\ell$ when the index is not important. Similarly to \eqref{eq:inter_d'}, we can decompose 
\(
\mathbb{P}\big(\bigcap_{i=1}^k (\mathcal{D}'_{e_i})^c\big)
\)
into a sum of several terms, where the probability of the intersection of $a$ events of type $(\mathcal{A}'^{\nearrow}_{j,\ell})^c$, $b$ events of type $(\mathcal{B}'_{j,\ell})^c$, and $c$ events of type $(\mathcal{B}'_{j+1,\ell})^c$ appears ${k \choose a,b,c}$ times. Therefore, by the trinomial expansion and using arguments analogous to the case $k=2$,
\[
\mathbb{P}\big((\mathcal{D}'_{e_1})^c,\ldots,(\mathcal{D}'_{e_k})^c\big) 
< \sum_{\substack{a,b,c\, \in\{0,1,..,.k\}\\ a+b+c=k}} {k\choose a,b,c}(\eta-2u_\nu)^{a}(u_\nu)^b(u_\nu)^{c}
=\eta^k.
\]

It is easy to see that an analogous argument extends from $(e_1,...,e_k)\in \mathbb{E}_{\nearrow}^k$ to any $(e_1,...,e_k)\in \mathbb{E}^k$. Therefore, property (I) holds, and we can apply Proposition~\ref{prop:percol} to conclude that the renewal contact process survives with positive probability for the chosen $\lambda$. This implies $\lambda_c<+\infty$.
\end{proof}

\section*{Acknowledgments}
Research supported by grants \#2023/13453-5, \#2024/06021-4 and \#2025/27064-6, S\~ao Paulo Research Foundation (FAPESP). This study was financed in part by the Coordena\c{c}\~ao
de Aperfei\c{c}oamento de Pessoal de N\'{\i}vel Superior - Brasil (CAPES) - Finance Code 001. It was also supported by the National Institute of Science and Technology in Stochastic Modeling and Complexity (INCT-NUMEC), funded by CNPq (grant no. 408590/2024-6).

The authors used a Large Language Models from DeepSeek and OpenAI to improve the clarity and linguistic flow of the manuscript. All mathematical content and conclusions remain the sole responsibility of the authors.

\appendix

\section{Contour counting estimates} \label{sec:appendix}

In this appendix we derive the contour-counting estimate used in the proof of Proposition~\ref{prop:percol}, and record a convenient explicit threshold for the associated power series.

We work with the first quadrant of the square lattice $\mathbb Z_{\ge0}^2$. Equivalently this model is isomorphic to dual graph $\smash{\stackrel{\rightarrow}{\mathbb{L}}}
_*$; however, for clarity we state the path model directly on $\mathbb Z_{\ge0}^2$.

Fix an integer $n\ge2$. An admissible path of length $n$ $\pi=(v_0,v_1,\dots,v_n)$ is a self-avoiding lattice path satisfying:
\begin{enumerate}[topsep=2pt,itemsep=2pt]
  \item $v_0=(0,k)$ for some integer $k$ with $1\le k\le n-1$;
  \item the first edge is: $v_1=(1,k)$ (the step $(1,0)$);
  \item for every intermediate vertex $1\le i\le n-1$ we have $y(v_i)\ge1$ (so the path meets the line $y=0$ only possibly at the final vertex);
  \item the last edge is downward: for some $\ell\ge1$ we have $v_{n-1}=(\ell,1)$ and $v_n=(\ell,0)$ (the step $(0,-1)$);
  \item the path is self-avoiding: $v_i\ne v_j$ for $i\ne j$.
\end{enumerate}
Let $c_n$ be the number of such admissible paths of length $n$. We give a short rigorous counting argument that produces a simple upper bound valid for every $n$.

\begin{proposition}\label{prop:peierls}
For every integer $n\ge2$,
\[
c_n \le (n-1)\,3^{\,n-2}.
\]
\end{proposition}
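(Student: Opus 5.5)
Count the admissible paths by their starting height and then by the choices available at each step. The starting vertex is $v_0=(0,k)$ with $1\le k\le n-1$, so there are at most $n-1$ choices for the start. Once $k$ is fixed, the first step is forced to be $(1,0)$ by condition 2, and the last step is forced to be $(0,-1)$ by condition 4. The only freedom left lies in the $n-2$ intermediate steps $v_1\to v_2,\dots,v_{n-2}\to v_{n-1}$.

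Bound each intermediate step by the non-backtracking bound. A nearest-neighbour step in $\mathbb Z^2$ has four possible directions. By self-avoidance (condition 5), $v_{i+1}\neq v_{i-1}$, so the step reversing the previous one is excluded. This leaves at most $3$ choices for each step $v_i\to v_{i+1}$ with $1\le i\le n-2$; the first of these steps is preceded by the forced step $(1,0)$, so the reversal is still well defined. The constraints of staying in the quadrant and staying at height $\ge1$ only remove further options, so they can be dropped for an upper bound.

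It follows that, for fixed $k$, the number of admissible paths is at most the number of non-backtracking words of length $n-2$, namely $3^{n-2}$. The last step is not free, because it must go from $(\ell,1)$ down to $(\ell,0)$. This only restricts which words are admissible and does not add choices, so the same bound holds. Summing over $k$ gives $c_n\le (n-1)3^{n-2}$. When $n=2$ there are no intermediate free steps, and the bound reads $c_2\le 1$. This is consistent: $k=1$, $v_1=(1,1)$, $v_2=(1,0)$.

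There is no real obstacle in the counting itself. The only point needing care is that the range of $k$ is indeed at most $n-1$. This holds because the path must descend from height $k$ to height $0$ using at most $n-1$ non-horizontal steps, so $k\le n-1$, which is already built into condition 1.
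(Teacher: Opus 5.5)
Your proof is correct and follows the paper's argument essentially verbatim. You fix the starting height $k$ (at most $n-1$ choices), note that the first step $(1,0)$ and the last step $(0,-1)$ are forced, and bound each of the $n-2$ interior steps by the three non-backtracking options, which gives $c_n\le (n-1)3^{n-2}$.
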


\begin{proof}
Fix $n\ge2$. For each admissible path $\pi=(v_0,\dots,v_n)$ the first edge $v_0\to v_1$ is fixed (it is $(1,0)$), and the last edge $v_{n-1}\to v_n$ is fixed (it is $(0,-1)$). Thus the path is determined by the sequence of the $n-2$ interior moves between $v_1$ and $v_{n-1}$.

We obtain an upper bound by relaxing the self-avoidance condition: at each interior step (when we are at a vertex $v_i$ with $1\le i\le n-2$) we forbid only the immediate backtrack to the previous vertex $v_{i-1}$, but we do not forbid revisiting other earlier vertices. Under this relaxation each interior step has at most three possible choices (the four neighbors minus the immediate predecessor). Therefore the number of possible interior move sequences of length $n-2$ under this relaxed rule is at most $3^{\,n-2}$.

The relaxed count upper-bounds the true count of self-avoiding interior sequences, because any self-avoiding continuation also satisfies the no-immediate-backtrack condition. Consequently, for each fixed starting height $k$ there are at most $3^{\,n-2}$ admissible interior sequences, hence at most $3^{\,n-2}$ admissible full paths with that $k$. Finally, there are at most $n-1$ choices for $k\in\{1,\dots,n-1\}$. Summing over
$k$ yields
\[
c_n \le (n-1)\,3^{\,n-2},
\]
which proves the proposition.
\end{proof}

The inequality of Proposition~\ref{prop:peierls} gives a fully explicit uniform bound which is convenient for deriving absolute-convergence criteria
for auxiliary power series. The next lemma produces a rational threshold that is easy to cite in the main text.

\begin{lemma}\label{lem:threshold}
Let $\varepsilon \in \left(0, \ {1}/{2^8}\right)$, then
\[
S(\varepsilon):=\sum_{n\ge2}(n-1)3^{\,n-2}\,\varepsilon^{\,n/4}<1.
\]
\end{lemma}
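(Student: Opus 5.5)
The plan is to evaluate the series in closed form. Substituting $x:=3\varepsilon^{1/4}$ turns it into a geometric-type series, which can be summed exactly. The resulting bound is then monotone in $\varepsilon$, so it suffices to check it against the endpoint $\varepsilon=2^{-8}$.

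First I rewrite each term:
\[
(n-1)3^{\,n-2}\varepsilon^{\,n/4}=\tfrac19 (n-1)\bigl(3\varepsilon^{1/4}\bigr)^n=\tfrac19 (n-1)x^n .
\]
Since $\varepsilon<2^{-8}$ gives $\varepsilon^{1/4}<1/4$, we have $0<x<3/4<1$. Hence the series converges absolutely, and the standard identity $\sum_{n\ge2}(n-1)x^n=x^2\sum_{m\ge1}m x^{m-1}=x^2/(1-x)^2$ applies. This yields
\[
S(\varepsilon)=\frac{x^2}{9(1-x)^2}=\frac{\varepsilon^{1/2}}{\bigl(1-3\varepsilon^{1/4}\bigr)^2}.
\]

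Next I bound the numerator and denominator separately using $t:=\varepsilon^{1/4}\in(0,1/4)$. The numerator satisfies $t^2<1/16$. For the denominator, $1-3t>1/4>0$, so $(1-3t)^2>1/16$. Therefore
\[
S(\varepsilon)<\frac{1/16}{1/16}=1 .
\]
Equivalently, $t\mapsto t^2/(1-3t)^2$ is strictly increasing on $(0,1/3)$ and equals exactly $1$ at $t=1/4$. Since $\varepsilon<2^{-8}$ is a strict inequality, the conclusion $S(\varepsilon)<1$ is strict as well.

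There is no real obstacle here. The only points needing care are keeping the inequality strict, and confirming $x<1$ before using the closed-form sum, so that the interchange is justified.
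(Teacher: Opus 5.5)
Your proof is correct and is essentially the paper's argument: the same substitution $3\varepsilon^{1/4}$, the same closed form $S(\varepsilon)=\varepsilon^{1/2}/(1-3\varepsilon^{1/4})^2$, and the same comparison of numerator and denominator with $1/16$ using $\varepsilon^{1/4}<1/4$. Your check that $3\varepsilon^{1/4}<1$ before summing, and your handling of the strict inequality, also match the paper.
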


\begin{proof}
Set $r:=\varepsilon^{1/4}$ (so $r\in(0,1)$) and $s:=3r$. For $|s|<1$
the identity $\sum_{k\ge1} k s^{k}=s/(1-s)^2$ holds and hence
\[
\sum_{n\ge2}(n-1)s^n=\frac{s^2}{(1-s)^2}.
\]
Therefore, for $3r<1$,
\[
S(\varepsilon)=\sum_{n\ge2}(n-1)3^{n-2}r^{n}
=\frac{1}{9}\sum_{n\ge2}(n-1)s^{n}
=\frac{r^2}{(1-3r)^2}.
\]
Assume now that $r\le 1/4$ (equivalently $\varepsilon\le 2^{-8}$). Then $3r\le 3/4$ and therefore $1-3r\ge 1/4$. Thus
\[
S(\varepsilon)=\frac{r^2}{(1-3r)^2}\le\frac{(1/4)^2}{(1/4)^2}=1,
\]
with strict inequality if $r<1/4$ (i.e. $\varepsilon<2^{-8}$). This proves the lemma.
\end{proof}

\medskip

\bibliography{references}
\end{document}